\numberwithin{equation}{section}
\theoremstyle{plain}
\newtheorem{thm}{Theorem}[section]
\newtheorem{prop}[thm]{Proposition}
\newtheorem{lem}[thm]{Lemma}
\theoremstyle{definition}
\theoremstyle{remark}
\newtheorem{rem}[thm]{Remark}
\newcommand{\R}{\mathbb{R}}
\newcommand{\T}{\mathbb{T}}
\newcommand\N{{\mathbb N}}
\newcommand\Z{{\mathbb Z}}
\newcommand\pref[1]{(\ref{#1})}
\let \eps\varepsilon
\def\ds{\displaystyle}
\def\rg{\rangle} 
\def\lg{\langle}
\newcommand\M{{\cal M}}
\newcommand\dive{\mathrm{div}}
\def\<#1,#2>{\left<#1,#2\right>}
\let\bar\overline
\DeclareMathOperator{\BV}{BV}
\DeclareMathOperator{\ac}{ac}
\DeclareMathOperator{\rel}{rel}
\title{Geodesics for a class of distances in the space of probability measures}
\author {P. ~Cardaliaguet, G.~Carlier, B.~Nazaret\thanks{\scriptsize CEREMADE, UMR CNRS 7534, Universit\'e Paris Dauphine, Pl. de Lattre de Tassigny, 75775 Paris Cedex 16, FRANCE
\texttt{cardaliaguet@ceremade.dauphine.fr, carlier@ceremade.dauphine.fr, nazaret@ceremade.dauphine.fr}.}}
\begin{document}
\maketitle

\begin{abstract}
In this paper, we study the characterization of geodesics for a class of distances between probability measures introduced by Dolbeault, Nazaret and Savar\'e. We first prove the existence of a potential function and then give necessary and sufficient optimality conditions that take the form of a coupled system of PDEs somehow similar to the Mean-Field-Games  system of Lasry and Lions. We also consider an equivalent formulation posed in a set of probability measures over curves. 
\end{abstract}

\textbf{Keywords:} dynamical transport distances, power mobility, geodesics in the space of probability measures, optimality conditions.

\section{Introduction}

Since the work of Benamou and Brenier \cite{bb} which showed that the squared $2$-Wasserstein distance between two probability measures $\rho_0$ and $\rho_1$ on $\R^d$ can be expressed as the infimum of the kinetic energy
\[\int_0^1 \int_{\R^d} \vert v(t,x)\vert^2 d\rho_t(x)dt\]
among solutions of the continuity equation with prescribed endpoints
\[\partial_t \rho_t+ \dive(\rho_t v)=0,  \; \rho(0,.)=\rho_0, \; \rho(1,.)=\rho_1\] 
 it is natural to view optimal transport theory from a dynamical perspective and to look for geodesics rather than just for transport maps. Similarly, probability measure valued curves governed by the continuity equation (with specific dependence of the velocity field $v$ on the mass $\rho$) play a crucial role in the theory of gradient flows in the Wassertsein space (see \cite{ags} and the references therein). 

\smallskip

In the recent paper \cite{dolnazsav}, Dolbeault, Nazaret and Savar\'e, introduced a new class of distances between probability measures through the introduction of some concave increasing nonnegative nonlinear mobility function $m$. Assuming that $\rho_0=\rho_0{\mathcal L}^d$ and $\rho_1=\rho_1{\mathcal L}^d$, the corresponding squared distance between $\rho_0$ and $\rho_1$ is (formally) given by   the infimum of 
\[\int_0^1 \int_{\R^d} \vert v(t,x)\vert^2 m(\rho(t,x))dxdt\]
subject to
\[\partial_t \rho+ \dive(m(\rho) v)=0,  \; \rho(0,.)=\rho_0, \; \rho(1,.)=\rho_1.\] 
From a Riemannian-like metric viewpoint, the formal interpretation of this mobility function is a conformal deformation factor of the Riemannian tensor on the tangent space of the space of probability measures at a given point $\rho$. From a modelling viewpoint, the concavity of $m$  is well suited to capture some congestion effects i.e. the fact that crowded zones of high densities result in higher values of the metric. Since McCann's pioneering work on displacement convexity \cite{McCann}, it is well known that displacement convex functionals play a distinguished role in the theory of gradient flows on the space of probability measures equipped with the $2$-Wasserstein distance. In \cite{clss}, Carrillo, Lisini, Slepcev and Savar\'e identified structural assumptions that guarantee the convexity of internal energy functionals along geodesics for the Dolbeault, Nazaret and Savar\'e distances with a general mobility function $m$ and this analysis will actually show useful for certain estimates in the present paper. 

 Let us mention that the monotonicity assumption on the mobility $m$ can be replaced by assuming that $m$ is compactly supported in an interval $(0,M)$, corresponding to a hard congestion effect, since here the density cannot assume values larger than $M$. Such a case has been treated in \cite{lm} and also considered in \cite{clss}. The main example that enters into this setting is $m(\rho)=\rho(1-\rho)$, and a study of geodesics for the associated distance can be found in the article of Brenier and Puel \cite{bp}, in the context of optimal multiphase transportation with a momentum constraint.

In this paper, we will focus on the case of a concave power mobility, $m(\rho)=\rho^\alpha$, $\alpha\in (0,1)$, and to avoid both compactness and boundary conditions issues, instead of working on a domain or $\R^d$, we will consider the case of the flat torus $\T^d:=\R^d/\Z^d$. Our goal is to characterize minimizing geodesics i.e. minimizers of
\[W_{\alpha}(\rho_0, \rho_1)^{2}:=
\inf\left\{\int_0^1 \int_{\T^d} \rho^{\alpha} \vert v\vert^2  \right\}\]
subject to the constraint
\[\partial_t \rho+\dive(\rho^{\alpha} v)=0, \;  \rho(0,.)=\rho_0, \; \rho(1,.)=\rho_1.\]
It is worth noting as in \cite{dolnazsav} that $W_{\alpha}(\rho_0, \rho_1)$ naturally interpolates between the $H^{-1}$ and the $2$-Wasserstein distance when $\alpha$ varies between $0$ and $1$ respectively. Just as in \cite{bb} taking $(\rho, w):=(\rho, \rho^{\alpha} v)$ as new variables, it is easy to see that this problem is a convex minimization problem, that is dual to some variational problem posed over some set of potentials $\phi$ (which naturally play the role of Lagrange multipliers associated to the constraint $\partial_t \rho+\dive(w)=0$).  Formally, the optimality conditions for $\rho$, $\phi$ obtained by convex duality read as the system
\[\left\{\begin{array}{llll}
&\partial_t \rho +\dive\Big(\frac{1}{2} \rho^\alpha \nabla \phi\Big)=0,\\
&\rho>0 \Rightarrow \partial_t \phi +\frac{\alpha}{4} \rho^{\alpha-1} \vert \nabla \phi \vert^2=0, \\
& \rho\geq 0, \; \partial_t \phi\leq 0, \; \rho(0,.)=\rho_0, \; \rho(1,.)=\rho_1.
\end{array}\right.\]
Let us remark that this system presents some similarities with the Mean-Field-Games  system of Lasry and Lions \cite{mfg2}, \cite{mfg3}. However, in the dual formulation the energy to be minimized is of the form  
\begin{equation}\label{intro:dual}
\kappa_\alpha \int_0^1 \int_{\T^d}  \Big(\frac{\vert \nabla \phi \vert^{\frac{2}{\alpha}}}{-\partial_t \phi} \Big)^{\frac{\alpha}{1-\alpha}}+ \int_{\T^d} \phi(0,.) \rho_0-\int_{\T^d} \phi(1,.) \rho_1
\end{equation}
for some positive constant $\kappa_\alpha$. Since this somehow nonstandard functional is not obviously coercive on some Sobolev space, the existence of a potential by the direct method of the calculus of variations  is not immediate at all.  In order to obtain estimates on minimizing sequences, a key ingredient is an estimate for some geodesic distances on $\T^d$ given by Lemma \ref{lem:estica}. In particular, we have to assume here that $\alpha>1-\frac{2}{d}$, an assumption that recurrently appears in previous works on this distances.  Secondly, the potential we obtain is merely $\BV$ in time so some extra work has to be done to derive and justify rigorously  the system of optimality conditions.

\smallskip

This result and its proof suggest a different approach to the problem, by somehow lifting  the geodesics problem to a variational problem at the level of measures on curves, which is reminiscent to the work of Carlier-Jimenez-Santambrogio \cite{cjs} (see also \cite{bcs}). More precisely, given $\eta$  a periodic measure supported by a suitable  set $\Gamma$ of  curves and which connects $\rho_0$ to $\rho_1$ (in the sense that the image of $\eta$ by the evalution maps at initial and terminal time are respectively $\rho_0$ and $\rho_1$), define the measure $\sigma_\eta$ on $[0,1]\times \T^d$  through
$$
\int_0^1 \int_{\T^d}f(s,x)d\sigma_\eta(s,x)= \int_\Gamma\int_0^1 f(s,\gamma(s))\left|\dot \gamma(s)\right|^{2/(2-\alpha)}\ dsd\eta(\gamma)\;,
$$
for any continuous and periodic in space  function $f$. We will investigate the precise links between the geodesics problem above and the minimization of
\[\eta \mapsto \int_0^1\int_{\T^d} \left(\sigma_\eta(t,x)\right)^{2-\alpha}\ dx\ dt.\]


\smallskip

The paper is organized as follows. In section \ref{existadj}, when $\alpha>1-\frac{2}{d}$, we establish the existence and uniqueness of a potential function (or adjoint state) and give necessary and sufficient optimality conditions characterizing geodesics in section \ref{optimalcondi}. Section \ref{sec:PbDualCourbe} is devoted to the  equivalent reformulation as a variational problem posed on the set of probability measures over curves.

\section{Existence of an adjoint state}\label{existadj}

In this section, we introduce (a suitably relaxed version of) the minimization problem \eqref{intro:dual}, dual to the problem defining the distance between two probability densities. The existence of a solution strongly relies of the existence of a minimizing sequence satisfying some a priori estimates, the proof of which is postponed to subsection~\ref{aprioriest}.

\subsection{Duality}

Let $\alpha\in (0,1)$, for $(\rho, w)\in \R\times \R^d$, let us define
\[H(\rho,w):=\left\{\begin{array}{llll}
&\frac{\vert w\vert^2}{\rho^\alpha} &\mbox{ if } \rho>0\\
&  0 &\mbox{ if } (\rho,w)=(0,0)\\
&  +\infty &\mbox{ otherwise }
\end{array}\right.\]
and for $(a,b)\in \R\times \R^d$,
\[L(a,b):=\left\{\begin{array}{llll}
& \kappa_\alpha  \Big( \frac{\vert b\vert^{\frac{2}{\alpha}}}{-a} \Big)^{\frac{\alpha}{1-\alpha}}&\mbox{ if } a<0\\
&  0 &\mbox{ if } (a,b)=(0,0)\\
&  +\infty &\mbox{ otherwise }
\end{array}\right.\]
where
$$
\kappa_\alpha:=(1-\alpha) \frac{ \alpha^{\frac{\alpha}{1-\alpha}}}{4^{\frac{1}{1-\alpha}}}.
$$
By direct computation, one checks that $L$ and $H$ are convex lsc and conjugates: $L=H^*$, $H=L^*$  and that for $(\rho,w)\in \R_+\times \R^d$, one has
\[\partial H(\rho, w)=\left\{\begin{array}{lll}
& \Big(-\alpha \frac{\vert w \vert^2}{\rho^{\alpha+1}}, \frac{2w}{\rho^\alpha}\Big) &\mbox{ if } \rho>0\\
&  \R_-\times \{0\} &\mbox{ if } (\rho,w)=(0,0).
\end{array}\right.\]
Recall that we work in the space-periodic framework, setting $\T^d:=\R^d/\Z^d$ and $Q:=[-1/2,1/2]^d$, this means that we identify spaces of  functions on $\T^d$ to spaces of  $Q$-periodic functions on $\R^d$.  Let us then consider 
\begin{equation}\label{primal}
\inf_{\phi\in C^1([0,1]\times \T^d)} J(\phi):=\int_0^1 \int_Q L(\partial_t \phi, \nabla \phi) dx dt+\int_Q \phi(0,.) \rho_0-\int_Q \phi(1,.) \rho_1
\end{equation}
where $\rho_0$, $\rho_1$ are two given probability measures on $Q$.  We shall see that in some sense to be made more precise, the variational problem \pref{primal} admits as dual the Dolbeault-Nazaret-Savar\'e problem in \cite{dolnazsav}
\begin{equation}\label{dns}
-W_\alpha(\rho_0, \rho_1)^2:=\sup_{(\rho, w)} -\int_0^1 \int_Q H(\rho^{\ac}, w) dx dt
\end{equation}
where $\rho^{\ac}$ stands for the absolutely continuous part of the measure $\rho$ with respect to the Lebesgue measure and the supremum is performed among pairs of measures $(\rho, w)\in \M_+([0,1]\times \T^d)\times \M([0,1]\times \T^d)^d$ such that $w$ is absolutely continuous with respect to the Lebesgue measure and $(\rho, w)$ is a  weak solution of  the continuity equation
\begin{equation}\label{cont}
\partial_t \rho +\dive(w)=0, \; \rho(0,.)=\rho_0, \; \rho(1,.)=\rho_1
\end{equation}
i.e. satisfies
\begin{equation}\label{contweak}
\int_0^1 \int_Q  \partial_t \phi  d\rho + \int_0^1 \int_Q  \nabla \phi \cdot w= \int_Q \phi(1,.) \rho_1-\int_Q \phi(0,.) \rho_0
\end{equation}
for every $\phi\in C^1([0,1]\times \T^d)$. The fact that \pref{dns} coincides with the problem studied in \cite{dolnazsav} has to be a little bit further explained: first remark that if $(\rho, w)\in \M_+([0,1]\times \T^d)\times \M([0,1]\times \T^d)^d$ solves \pref{contweak} then the time marginal of $\rho$ is the Lebesgue measure. By disintegration we can thus write $d\rho=\rho_t \otimes dt$. Note now that $w\in L^{1}((0,1)\times \T^d)$, which implies (see for instance Lemma  4.1 in \cite{dolnazsav})  that $t\mapsto \rho_t$ is continuous for the weak $*$ topology of $\M$. Finally $\rho^{\ac}(t,x)=\rho_t^{\ac}(x)$ so that the functional in \pref{dns} can be rewritten as $\int_0^1 \int_Q H(\rho_t^{\ac}(x), w(t,x) )dx dt$ which is exactly the functional to be minimized in \cite{dolnazsav}.

\smallskip

Assuming $\rho_0$ and $\rho_1$ belong to $L^1(\T^d)$, we shall also need to suitably relax \pref{primal} by considering
\begin{equation}\label{primalr}
\inf_{\phi\in K} J^{\rel}(\phi):=\int_0^1 \int_Q L(\partial_t \phi^{\ac}, \nabla \phi) dx dt+\int_Q \phi(0,.) \rho_0-\int_Q \phi(1,.) \rho_1
\end{equation}
where 
\[K:=\{\phi\in \BV\cap L^{\infty} \; : \; \partial_t \phi \leq 0, \; \nabla \phi\in L^1\}\]
and $\partial_t \phi^{\ac}$ denotes the absolutely continuous part of the measure $\partial_t \phi $ with respect to the Lebesgue measure. Since elements of $K$ are bounded and monotone nonincreasing with respect to time, the second and third term in $J^{\rel}(\phi)$ are well-defined by monotone convergence i.e.  $\phi(0,.)$ and $\phi(1,.)$ are intended as $\phi(0,.):=\phi(0^+,.)=\sup_{t\in(0,1)} \phi(t,.)$, and  $\phi(1,.)=\phi(1^-,.)=\inf_{t\in(0,1)} \phi(t,.)$. For further use, let us also remark that for $\phi\in K$, by Beppo-Levi's monotone convergence theorem, one has
\begin{equation}\label{btsci1}
\int_Q \phi(0,.) \rho_0=\sup_{\delta\in (0,1)} \frac{1}{\delta} \int_{[0,\delta]\times Q} \phi(t,x) \rho_0(x)dx dt, \; 
\end{equation}
\begin{equation}\label{btsci2}
\int_Q \phi(1,.) \rho_1=\inf_{\delta\in (0,1)} \frac{1}{\delta} \int_{[1-\delta, 1]\times Q} \phi(t,x) \rho_1(x)dx dt.
\end{equation}

\begin{prop}\label{dualitydns} Let $\rho_0$ and $\rho_1$ be two probability measures on $\T^d$ such that $W_\alpha(\rho_0, \rho_1)<+\infty$, then 
\[-W_\alpha(\rho_0, \rho_1)^2=\inf \pref{primal}\]
and the infimum in \pref{dns} is attained.
If furthermore $\rho_0$ and $\rho_1$ belong to $L^1(\T^d)$ then, in addition, $\inf \pref{primal} =\inf \pref{primalr}.$
\end{prop}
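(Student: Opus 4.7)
The plan is to derive $-W_\alpha(\rho_0,\rho_1)^2 = \inf \eqref{primal}$ together with the attainment in \eqref{dns} from a single application of the Fenchel--Rockafellar theorem, and then to pass from \eqref{primal} to \eqref{primalr} by a mollification argument.

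The trivial inequality $-W_\alpha^2 \leq \inf \eqref{primal}$ is obtained as follows. For any admissible $(\rho,w)$ for \eqref{dns} and any $\phi\in C^1$ with $J(\phi)<+\infty$ (which forces $\partial_t\phi\leq 0$ pointwise, since $L$ is finite only on $\{a\leq 0\}$), Young's inequality $L(a,b)+H(\rho,w)\geq a\rho+b\cdot w$ applied with $(a,b)=(\partial_t\phi,\nabla\phi)$ and $\rho=\rho^{\ac}$, combined with the observation $\int \partial_t\phi\,\rho^{\ac}\geq\int \partial_t\phi\,d\rho$ (using $\partial_t\phi\leq 0$ and $\rho-\rho^{\ac}\geq 0$) and the weak continuity equation \eqref{contweak}, yields
\[\int_0^1\int_Q H(\rho^{\ac},w) + \int_0^1\int_Q L(\partial_t\phi,\nabla\phi) \geq \int_Q\phi(1,\cdot)\rho_1 - \int_Q\phi(0,\cdot)\rho_0,\]
so that $-\int_0^1\int_Q H(\rho^{\ac},w)\leq J(\phi)$.

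For the reverse inequality and the attainment, I would apply Fenchel--Rockafellar to \eqref{primal} written as $\inf_{\phi\in C^1}\{F(A\phi)+G(\phi)\}$, where $A\phi:=(\partial_t\phi,\nabla\phi)$, $F(a,b):=\int_0^1\int_Q L(a,b)$, and $G(\phi):=\int_Q\phi(0,\cdot)\rho_0-\int_Q\phi(1,\cdot)\rho_1$. Integration by parts identifies the constraint $-A^*\mu=G$ (in the distributional sense) with the weak continuity equation \eqref{contweak}; and the classical computation of the Legendre transform of an integral functional on measures yields $F^*(\rho,w)=\int_0^1\int_Q H(\rho^{\ac},w)$, the singular part of $\rho$ contributing zero because the recession function of $L$ in the first variable vanishes on $\{a\leq 0\}$, and the effective domain of $F^*$ being restricted to $w$ absolutely continuous because the recession in the second variable is infinite. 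The constraint qualification is checked with $\phi_0(t,x):=-t$, since $A\phi_0=(-1,0)$ lies in the interior of $\dom L$, where $L$ is continuous. Strong duality then gives the equality and the attainment; the weak-$*$ relative compactness of a maximizing sequence $(\rho_n,w_n)$ follows from $\|\rho_n\|_\M=1$ (testing \eqref{contweak} against $\phi\equiv 1$) and from
\[\|w_n\|_{\M^d}\leq\Big(\int_0^1\int_Q H(\rho_n^{\ac},w_n)\Big)^{1/2}\Big(\int_0^1\int_Q(\rho_n^{\ac})^\alpha\Big)^{1/2}\leq C,\]
by Cauchy--Schwarz and Jensen applied to the concave power $\rho\mapsto\rho^\alpha$.

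For the last assertion, $\inf\eqref{primalr}\leq\inf\eqref{primal}$ is immediate since $C^1\cap\{\partial_t\phi\leq 0\}\subset K$. For the converse, given $\phi\in K$ with $J^{\rel}(\phi)<+\infty$, I extend $\phi$ to $\R\times\T^d$ by $\phi(0^+,\cdot)$ for $t\leq 0$ and $\phi(1^-,\cdot)$ for $t\geq 1$, and set $\phi_\eps:=\phi\ast\eta_\eps$ for a standard space-time mollifier $\eta_\eps$ whose temporal kernel is supported in $[-1,0]$. Then $\phi_\eps\in C^\infty$ and $\partial_t\phi_\eps\leq 0$. The crucial structural observation is that $L$ is nondecreasing in its first variable on $\{a<0\}$ (direct computation from its explicit form) and that the singular part of $\partial_t\phi$ is nonpositive (monotonicity of $\phi$ in time); combining these with Jensen's inequality for the convex function $L$ gives
\[L(\partial_t\phi_\eps,\nabla\phi_\eps) \leq L(\eta_\eps\ast\partial_t\phi^{\ac},\,\eta_\eps\ast\nabla\phi) \leq \eta_\eps\ast L(\partial_t\phi^{\ac},\nabla\phi),\]
whence $\limsup_\eps\int_0^1\int_Q L(\partial_t\phi_\eps,\nabla\phi_\eps)\leq\int_0^1\int_Q L(\partial_t\phi^{\ac},\nabla\phi)$. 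The endpoint terms converge by dominated convergence using $\rho_0,\rho_1\in L^1(\T^d)$, the $L^\infty$ bound on $\phi$, and the pointwise convergences $\phi_\eps(0,\cdot)\to\phi(0^+,\cdot)$ and $\phi_\eps(1,\cdot)\to\phi(1^-,\cdot)$ secured by \eqref{btsci1}--\eqref{btsci2}. Hence $\limsup_\eps J(\phi_\eps)\leq J^{\rel}(\phi)$, which concludes.

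The most delicate step I expect is the identification of $F^*$ on measures as $\int_0^1\int_Q H(\rho^{\ac},w)$, i.e.\ the fact that the lsc envelope of $\int H$ retains only the absolutely continuous part of $\rho$ and forces $w$ to be absolutely continuous; and, in the relaxation argument, the careful treatment of temporal boundary traces, where the only-BV-in-time structure of $\phi\in K$ is tamed by the monotonicity of $\phi$ and the pointwise characterizations \eqref{btsci1}--\eqref{btsci2}.
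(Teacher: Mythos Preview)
Your duality argument via Fenchel--Rockafellar is essentially the paper's; the preliminary ``trivial inequality'' and the separate compactness discussion for attainment are redundant (Fenchel--Rockafellar already delivers both the equality and the attainment of the max), but harmless. The identification of $F^*$ on measures is exactly what the paper does, invoking Rockafellar's theorem on integral functionals together with the explicit recession function $H_\infty$ of $H$.

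The relaxation step, however, has a gap. With a one-sided temporal kernel supported in $[-1,0]$, for $t\in(1-\eps,1]$ the convolution window $[t,t+\eps]$ reaches past $t=1$ into the region where you extended $\phi$ by $\phi(1^-,\cdot)$. There $\partial_t\phi^{\ac}=0$ while $\nabla\phi(1^-,\cdot)$ need not vanish, so $L(\partial_t\phi^{\ac},\nabla\phi)=+\infty$ on that strip and your Jensen bound $L(\partial_t\phi_\eps,\nabla\phi_\eps)\leq\eta_\eps\ast L(\partial_t\phi^{\ac},\nabla\phi)$ becomes vacuous near $t=1$; the claimed inequality
\[
\limsup_{\eps\to 0}\int_0^1\!\int_Q L(\partial_t\phi_\eps,\nabla\phi_\eps)\;\leq\;\int_0^1\!\int_Q L(\partial_t\phi^{\ac},\nabla\phi)
\]
therefore does not follow. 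The paper circumvents this by mollifying with an \emph{even} temporal kernel and then rescaling time via $\tilde\phi^\eps(t,x):=\phi^\eps(\eps+(1-2\eps)t,x)$, so that for $t\in[0,1]$ the effective convolution window stays inside $(0,1)$; this produces only a harmless factor $(1-2\eps)^{-1/(1-\alpha)}$ in front of the $L$-integral, and monotonicity of $\phi$ in time yields the one-sided inequalities \pref{convolin1}--\pref{convolin2} for the boundary terms (exact convergence of the endpoint terms is not claimed, nor needed). With this modification your argument goes through.
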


\begin{rem} If we consider a general concave and nonnegative mobility fonction $m$, the analogue of the problem \eqref{primal} writes
$$
\inf\left[-\int_0^1\int_Q m^\star\left(-\frac{\partial_t\phi}{\left|\nabla\phi\right|^2}\right)\left|\nabla\phi\right|^2 dxdt\right],
$$
where $m^\star$ stands for the concave Legendre transform of $m$, defined by
$$
m^\star(\sigma) = \inf_{\rho\in\R}\left(\sigma\cdot\rho-m(\rho)\right).
$$
The corresponding duality theorem could be proved for such a general $m$. However, the main difficulty here is to get the existence of an adjoint state: getting estimates as in Proposition~\ref{prop:constrphin} for a general mobility function $m$ is an open problem. 
\end{rem}

\begin{proof}
Let us rewrite \pref{primal} as 
\[\inf_{\phi\in C^1([0,1]\times \T^d)} \{F(D \phi)+G(\phi)\}\]
where $D$ : $C^1([0,1] \times \T^d)\to C([0,1]\times \T^d)\times C([0,1]\times \T^d)^d$ is defined by $D\phi:=(\partial_t \phi, \nabla \phi)$, 
\[F(a,b):=\int_0^1 \int_Q L(a(t,x), b(t,x)) dx dt,
\]
$\forall (a,b)\in C([0,1]\times \T^d)\times C([0,1]\times \T^d)^d$ and 
\[G(\phi):=\int_Q \phi(0,.) \rho_0-\int_Q \phi(1,.) \rho_1.\]
Note that, if $\phi\in C^1([0,1]\times \T^d)$ is such that $\partial_t \phi\leq -\delta_0<0$, then $F$ is continuous (for the uniform topology) at $D\phi$. Since $G$ is continuous and $W_\alpha(\rho_0, \rho_1)<+\infty$, Fenchel-Rockafellar's duality theorem thus implies that 
\begin{equation}\label{fenchel}
\inf \pref{primal}=\max_{(\rho, w)\in \M\times \M^d} \{-F^*(\rho, w)-G^*(-D^*(\rho, w))\}
\end{equation}
where $\M:=\M([0,1]\times \T^d)$ is of course identified to the topological dual of $C([0,1]\times \T^d)$. By direct computation, we have 
\[G^*(-D^*(\rho, w))=\left\{\begin{array}{lll}
& 0 &\mbox{ if  $(\rho,w)$ solves \pref{cont}}\\
&  +\infty &\mbox{ otherwise}.
\end{array}\right.\]
Thanks to theorem 5 in \cite{rockafellar2}, and the fact that $L^*=H$, one has
\[F^*(\rho, w)=\int_0^1 \int_Q H(\rho^{\ac}, w^{\ac})+ \int_0^1 \int_Q H_\infty\Big (\frac{d \rho^s} {d\theta}, \frac{ d w^s} {d\theta}\Big) d\theta\]
where $(\rho^{\ac}, w^{\ac})$ and $(\rho^s, w^s)$ denote respectively the absolutely continuous part and singular part of $(\rho,w)$, $\theta$ is any measure with respect to which  $(\rho^s, w^s)$ is absolutely continuous (for instance $\rho^s +\vert w^s\vert$) and $H_\infty$ is the recession function of $H$:
\[H_\infty(\rho,w)=\sup_{\lambda>0} \frac{1}{\lambda} H(\lambda \rho, \lambda w)  =\left\{\begin{array}{lll}
& 0 &\mbox{ if  $\rho\geq 0$ and $w=0$}\\
&  +\infty &\mbox{ otherwise}.
\end{array}\right.\]
Replacing in \pref{fenchel}, we thus deduce that $\inf\pref{primal}= -W_\alpha(\rho_0, \rho_1)^2$. 

Next we assume that $\rho_0$ and $\rho_1$ belong to $L^1(\T^d)$ and $W_\alpha(\rho_0, \rho_1)<+\infty$. Then, still by the Fenchel-Rockafellar's duality theorem, the infimum in \pref{dns} is attained. 
Let us finally prove that $\inf \pref{primal} =\inf \pref{primalr}$. The fact that $\inf \pref{primal}\geq \inf \pref{primalr}$ is obvious. Let $\phi\in K$ (extended by $\phi(0,.)$ for $t\leq 0$ and by $\phi(1,.)$ for $t\geq 1$), then let $\phi^\eps:=\eta^\eps \star \phi$ where $\eta^\eps(t,x):=\eps^{-d-1} \alpha(\eps^{-1} t)\beta(\eps^{-1} x)$ with $\alpha \in C_c^{\infty}((-1/2,1/2))$, $\alpha\geq 0$, $\int_{-1/2}^{1/2} \alpha=1$, $\alpha$ even, $\beta \in C_c^{\infty}((-1/2,1/2)^d)$, $\beta\geq 0$, $\int_Q \beta=1$, $\beta$ even. Moreover we set $\tilde \phi^\eps(t,x)= \phi^\eps(\eps+(1-2\eps)t,x)$. 
To prove the remaining inequality it is enough to prove that
\begin{equation}\label{approx1}
J^{\rel}(\phi)\geq \limsup_{\eps\to 0^+} J(\tilde \phi^\eps).
\end{equation}  
To see this, we assume of course that $J^{\rel}(\phi)<+\infty$ and first remark that $\eta^\eps\star \partial_t \phi^{\ac} \geq \partial_t \phi^\eps$. Using the fact that $L$ is convex and nondecreasing in its first argument we thus get
\[\begin{split}
 \int_0^1 \int_Q L(\partial_t \tilde  \phi^\eps, \nabla\tilde  \phi^\eps) \leq \frac{1}{(1-\eps)^{\frac{\alpha}{1-\alpha}+1}}  \int_\eps^{1-2\eps} \int_Q L(\eta^\eps\star \partial_t \phi^{\ac}, \eta^\eps \star \nabla \phi)\\
 \leq  \frac{1}{(1-2\eps)^{\frac{1}{1-\alpha}}}
  \int_\eps^{1-\eps} \int_Q \eta^\eps \star L(\partial_t \phi^{\ac}, \nabla \phi)\to  \int_0^1 \int_Q L(\partial_t \phi^{\ac}, \nabla \phi) \mbox{ as $\eps\to 0^+$}.
\end{split}\]
We next observe that thanks to the monotonicity of $\phi$ in time, setting $\beta^\eps:=\eps^{-d} \beta(\eps^{-1}.)$, we have
\begin{equation}\label{convolin1}
\int_Q \tilde \phi^\eps(0,.) \rho_0 \leq \int_Q  \phi(0,.) \beta^\eps\star \rho_0\to \int_Q \phi(0,.) \rho_0 \mbox{ as $\eps\to 0^+$}
\end{equation}
and similarly 
\begin{equation}\label{convolin2}
\int_Q \tilde \phi^\eps(1,.) \rho_1 \geq \int_Q  \phi(1,.) \beta^\eps\star \rho_1\to \int_Q \phi(1,.) \rho_1 \mbox{ as $\eps\to 0^+$}
\end{equation}
so that \pref{approx1} holds. 
\end{proof}

The crucial step to establish the existence of an optimal potential  in \pref{primalr} is to find a minimizing sequence for \pref{primal} that satisfies the estimate
\[\Vert \phi_n\Vert_{L^{\infty}}+ \Vert \partial_t \phi_n\Vert_{L^1}+ \Vert \nabla \phi_n \Vert_{L^2} \leq C.\]
The proof of this fact is postponed to subsection \ref{aprioriest}. These bounds  enable us to prove the following result.

\begin{thm}\label{existdualdns}
If $\alpha>1-2/d$ and $\rho_0$ and $\rho_1$ belong to $L^1(\T^d)$. Then \pref{primalr} admits a unique solution $\phi$ up to an additive constant. Moreover $\nabla \phi \in L^2$. 
\end{thm}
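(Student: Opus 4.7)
The plan is to apply the direct method of the calculus of variations to $J^{\rel}$ on $K$. The crucial input is the a priori estimate established in subsection~\ref{aprioriest}, which (combined with Proposition~\ref{dualitydns}, ensuring $\inf \pref{primal} = \inf\pref{primalr}$) furnishes a minimizing sequence $(\phi_n)\subset C^1([0,1]\times\T^d)$ for \pref{primalr} satisfying
\[
\|\phi_n\|_{L^\infty} + \|\partial_t \phi_n\|_{L^1} + \|\nabla \phi_n\|_{L^2} \leq C.
\]
Since $J^{\rel}$ is invariant under adding a constant to $\phi$, I normalize by imposing, for instance, $\int_Q \phi_n(1,\cdot)\,dx = 0$. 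Extracting a subsequence, weak-$*$ compactness in $L^\infty$, weak compactness in $L^2$ of the spatial gradients, and weak-$*$ compactness of $\partial_t \phi_n$ as bounded measures produce a limit $\phi$ with $\phi\in L^\infty$, $\nabla\phi\in L^2$, $\partial_t\phi\in\M([0,1]\times\T^d)$; the sign condition $\partial_t\phi\leq 0$ is preserved in the limit, so $\phi\in K$.

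The most delicate step is the lower semicontinuity of $J^{\rel}$ along this subsequence. For the bulk term, I invoke a Reshetnyak/Goffman--Serrin-type theorem: $L$ is convex, lsc, nonnegative, and its recession function $L_\infty$ is finite (in fact zero) only on the cone $\{(a,0):a\leq 0\}$. Since $\nabla\phi_n$ converges weakly in $L^2$, the limiting gradient has no singular part, so the singular part of $(\partial_t\phi,\nabla\phi)$ lies precisely in the direction where $L_\infty = 0$ and drops out harmlessly. Hence
\[
\int_0^1\!\int_Q L(\partial_t \phi^{\ac}, \nabla \phi)\,dx\,dt \leq \liminf_n \int_0^1\!\int_Q L(\partial_t \phi_n^{\ac}, \nabla \phi_n)\,dx\,dt.
\]
For the boundary terms I use the representations \pref{btsci1} and \pref{btsci2}. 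Since $\rho_0\in L^1(Q)$, the function $(t,x)\mapsto \delta^{-1}\rho_0(x)\mathbf{1}_{[0,\delta]}(t)$ is an admissible $L^1$ test function, and weak-$*$ convergence of $(\phi_n)$ in $L^\infty$ yields
\[
\liminf_n \int_Q \phi_n(0,\cdot)\rho_0 \geq \liminf_n \frac{1}{\delta}\int_{[0,\delta]\times Q}\phi_n\rho_0 = \frac{1}{\delta}\int_{[0,\delta]\times Q}\phi\rho_0
\]
for every $\delta\in(0,1)$; taking the supremum in $\delta$ and appealing to \pref{btsci1} gives $\liminf_n \int_Q \phi_n(0,\cdot)\rho_0 \geq \int_Q \phi(0,\cdot)\rho_0$. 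The symmetric argument using \pref{btsci2} yields $\limsup_n \int_Q \phi_n(1,\cdot)\rho_1 \leq \int_Q \phi(1,\cdot)\rho_1$. Summing the three inequalities, $J^{\rel}(\phi)\leq \liminf_n J^{\rel}(\phi_n) = \inf\pref{primalr}$, establishing that $\phi$ is a minimizer.

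For uniqueness up to an additive constant, if $\phi_1,\phi_2\in K$ are two minimizers then $\tfrac12(\phi_1+\phi_2)\in K$ is also a minimizer. Strict convexity of $L$ on the interior $\{a<0\}$ of its effective domain (whose boundary collapses to the single point $(0,0)$) forces the pointwise Jensen inequality to saturate, so that $(\partial_t\phi_1^{\ac},\nabla\phi_1) = (\partial_t\phi_2^{\ac},\nabla\phi_2)$ a.e. Thus $\phi_1-\phi_2$ is $x$-independent with purely singular time derivative, and the matching of the linear boundary terms (using $\int_Q\rho_0 = \int_Q\rho_1 = 1$) combined with the monotonicity constraint built into $K$ forces $\phi_1-\phi_2$ to be a constant. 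The main difficulty throughout is the lsc of the bulk term: $L(a,b)$ grows like $|b|^{2/(1-\alpha)}$ with exponent $>2$ in $b$, while the minimizing sequence only provides weak $L^2$ control on $\nabla\phi_n$ and a mere measure bound on $\partial_t\phi_n$; the success of the argument rests on the fact that $J^{\rel}$ is tailored exactly to the recession structure of $L$, so that the potential singular part of $\partial_t\phi$ contributes nothing and the convex lower semicontinuity carries through on the absolutely continuous part.
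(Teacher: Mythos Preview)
Your existence argument is essentially the same as the paper's and is fine. The gap is in the uniqueness step.

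You assert that $L$ is strictly convex on the interior $\{a<0\}$ of its effective domain, but this is false: $L(a,0)=0$ for every $a\leq 0$, so $L$ is affine (identically zero) along the entire ray $\{(a,0):a\leq 0\}$. Consequently, equality in the averaged Jensen inequality for two minimizers $\phi_1,\phi_2$ does \emph{not} force $(\partial_t\phi_1^{\ac},\nabla\phi_1)=(\partial_t\phi_2^{\ac},\nabla\phi_2)$ a.e.; it only forces that at a.e.\ point either the full gradients agree or $\nabla\phi_1=\nabla\phi_2=0$. In either case one gets $\nabla\phi_1=\nabla\phi_2$ a.e., hence $\phi_2=\phi_1+\xi(t)$ for some $\BV$ function $\xi$ of time alone, but nothing yet about $\partial_t\xi^{\ac}$.

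Your subsequent claim that $\xi$ has purely singular derivative, and that ``matching the linear boundary terms combined with the monotonicity constraint'' forces $\xi$ to be constant, is not justified. Both $\phi_1$ and $\phi_2$ are nonincreasing in $t$, but their difference $\xi$ need not be monotone, so $\xi(0^+)=\xi(1^-)$ (which is what the boundary terms give, since $\int\rho_0=\int\rho_1=1$) does not imply $\xi$ constant. The paper handles this in two further steps: first, if $\partial_t\xi^{\ac}\neq 0$ on a set of positive measure, then $\nabla\phi_1=0$ there, so $\phi_1(t,\cdot)$ is a constant $c$ for such $t$, and monotonicity of $\phi_1$ then forces $W_\alpha(\rho_0,\rho_1)\leq 0$, a contradiction when $\rho_0\neq\rho_1$. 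Second, for the singular part, one decomposes $\partial_t\xi=\partial_t\xi^+-\partial_t\xi^-$ and shows that $\tilde\phi(t,x):=\phi_1(t,x)+\partial_t\xi^+([0,t])$ still lies in $K$ with $J^{\rel}(\tilde\phi)=J^{\rel}(\phi_1)-\partial_t\xi^+([0,1])$, so minimality of $\phi_1$ kills $\partial_t\xi^+$; symmetrically for $\partial_t\xi^-$. Your sketch omits both of these mechanisms.
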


\begin{rem}\label{rem:LienInt} Using the homogeneities in the functional $J^{\rel}$, one easily checks that, if $\phi$ is the unique minimizer, then 
$$
\frac{2-\alpha}{1-\alpha} \int_0^1\int_Q L(\partial_t \phi^{\ac}, \nabla \phi) = \int_Q \phi(1,\cdot)\rho_1-\int_Q \phi(0,\cdot)\rho_0\;.
$$
\end{rem} 

\begin{proof} According to proposition \ref{prop:constrphin} below, there is a 
a minimizing sequence $\phi_n$ of \pref{primal} (hence also of \pref{primalr}) and a constant $C>0$, such that for every $n$, $\partial_t \phi_n\leq -1/n$, and 
\begin{equation}\label{estimatespierre}
\Vert \phi_n\Vert_{L^{\infty}}+ \Vert \partial_t \phi_n\Vert_{L^1}+ \Vert \nabla \phi_n \Vert_{L^2} \leq C.
\end{equation}
Taking a subsequence if necessary and using the monotonicity of $\phi_n$ with respect to time, we may assume that there is a $\phi \in K$ such that $\nabla \phi\in L^2$ and
\begin{eqnarray}
&\phi_n \to \phi \mbox{ strongly in $L^1$ and weakly $*$ in $L^{\infty}$},\label{pierre1}\\
&\partial_t \phi_n \to \partial_t \phi  \mbox{ weakly $*$ in $\M$},\label{pierre2}\\
&\nabla \phi_n \to \nabla \phi  \mbox{ weakly in $L^2$}.\label{pierre3}
\end{eqnarray}
To prove that $\phi$ solves \pref{primal}, we first deduce from  theorem 5 in \cite{rockafellar2}, that the functional 
\[(\mu,\nu)\in \M\times \M^d \mapsto \left\{\begin{array}{llll}
& \ds{ \int_0^1\int_Q L(\mu^{\ac}, \nu) } &\mbox{ if  $\mu\leq 0$ and $\nu\in L^1$} \\
&  +\infty &\mbox{ otherwise }
\end{array}\right.\]
is the convex conjugate of 
\[(\theta,v)\in C([0,1]\times \T^d)\times C([0,1]\times \T^d)^d \mapsto \int_0^1\int_Q H(\theta(t,x), v(t,x))dx dt\ .\]
It is therefore lsc for the weak $*$ topology of $\M$ and therefore
\[\int_0^1 \int_Q L(\partial_t \phi^{\ac}, \nabla \phi)  \leq \liminf_n \int_0^1 \int_Q L(\partial_t \phi_n, \nabla \phi_n) dx dt .\]
We finally deduce from \pref{btsci1}, \pref{btsci2} that the second term in $J^{\rel}$ is lsc for the weak $*$ topology of $L^{\infty}$, thanks to \pref{pierre1}, we thus get
\[\int_Q \phi(0,.) \rho_0-\int_Q \phi(1,.) \rho_1 \leq \liminf_n \Big( \int_Q \phi_n(0,.) \rho_0-\int_Q \phi_n(1,.) \rho_1 \Big)\]
so that $\phi$ solves \pref{primalr}. 

\smallskip

To prove the uniqueness claim, let us first assume that $\rho_0\neq \rho_1$. We observe that if $(a,b)$ and $(a',b')$ are two distinct points in $\R_-\times \R^d$ at which $L$ is finite, then $L$ is strictly convex on $[(a,b), (a',b')]$ unless $b=b'=0$. Assume now  that   $\phi$ and $\psi$ are two solutions of \pref{primalr}. By the previous observation, we have, for almost all $(t,x)$, $$
{\rm either}\;  (\partial_t\phi^{\ac}(t,x), \nabla \phi(t,x)) = (\partial_t\psi^{\ac}(t,x),\nabla \psi(t,x))
$$
$$
{\rm or }\; \partial_t\phi^{\ac}(t,x)\neq \partial_t\psi^{\ac}(t,x)\;{\rm and}\; 
\nabla \phi(t,x)=\nabla \psi(t,x)=0\;.
$$
In particular $\nabla \phi=\nabla \psi$ a.e., so that there is a measurable map $\xi(t)$ with $\psi(t,x)=\phi(t,x)+\xi(t)$. Note that $\xi$ is BV, because $\phi$ and $\psi$ are BV.  Next we note that, for a.e. $t$ such that $\partial_t\xi^{\ac}(t)\neq 0$, we have $\partial_t\phi^{\ac}(t,x)\neq \partial_t\psi^{\ac}(t,x)$ and therefore $\nabla \phi(t,x)=\nabla \psi(t,x)=0$. Let us assume for a while that  the set $E$  of $t\in (0,1)$ for which $\partial_t\xi^{\ac}(t)\neq 0$ has a positive measure. Then, if $t$ is a density point of $E$, $\phi(t,x)=c$ a.e. for some constant $c$. Since $\phi(0,x)\geq \phi(t,x)=c\geq \phi(1,x)$ a.e., we get 
$$
\int_Q \phi(0,.) \rho_0\geq c \geq \int_Q \phi(1,.) \rho_1.
$$
Therefore
$$
 W_\alpha(\rho_0, \rho_1)^2= - 
\int_0^1 \int_Q L(\partial_t \phi^{\ac}, \nabla \phi) dx dt-\int_Q \phi(0,.) \rho_0+\int_Q \phi(1,.) \rho_1\leq 0
$$
which is impossible because $\rho_0\neq \rho_1$ and $W_\alpha$ is a distance. So $\partial_t\xi^{\ac}=0$. Next we prove that the singular measure $\partial_t \xi$ is zero. Let us decompose $\partial_t \xi$ into its positive and negative part $\partial_t \xi^+$ and $\partial_t \xi^-$ (i.e., $\partial_t \xi=
\partial_t \xi^+-\partial_t \xi^-$, $\partial_t \xi^+, \partial_t \xi^-\geq0$) 
and  let us set $\phi_1(t,x)=\phi(t,x)+ \partial_t \xi^+([0,t])$. Recall that $\partial_t \xi^+$ is concentrated on a Borel set $E\subset [0,1]$ such that $\partial_t\xi^-(E)=0$. We claim that $\phi_1$ is nonincreasing in time. Indeed, for any Borel set $A\times B \subset [0,1]\times Q$, 
$$
\partial_t \phi_1 (A\times B)= \partial_t \psi((A\cap E)\times B)+ \partial_t \phi((A\cap E^c)\times B)\leq 0\;.
$$
Note moreover that $\phi_1(0,\cdot)=\phi(0,\cdot)$, $\partial_t \phi_1^{\ac}=\partial_t \phi^{\ac}$,
$\nabla \phi_1=\nabla \phi$,  $\phi_1(1,\cdot)= \phi(1,\cdot)+ \partial_t \xi^+([0,1])$ so that 
$$
J(\phi_1)= \int_0^1 \int_Q L(\partial_t \phi^{\ac}, \nabla \phi) dx dt+\int_Q \phi(0,.) \rho_0-\int_Q \phi(1,.) \rho_1- \partial_t \xi^+([0,1])\;.
$$
Since $\phi$ is a minimizer, this implies that $\partial_t \xi^+([0,1])=0$, so that $\partial_t \xi^+=0$. Arguing in the same way with $\psi_1=\psi+\partial_t \xi^-([0,t])$, one gets that $\partial_t \xi^-=0$. In conclusion, $\xi$ is constant. 

If $\rho_0=\rho_1$, then $\phi=0$ is optimal. If $\psi$ is another optimal solution,  one can show as in the first part of the proof that $\psi(t,x)=\xi(t)$, where $\xi$ is a nonincreasing map. Computing the criterium for $\xi$ shows that $\xi(0)=\xi(1)$, so that again $\xi$ is constant. 
\end{proof}

A consequence of the proof of Theorem \ref{existdualdns} is the following technical remark, needed below: 

\begin{lem}\label{lem:LimPhin} Under the assumption of Theorem \ref{existdualdns}, let  $(\phi_n)$ be a minimizing sequence of \pref{primal} such that, for any $n$, $\partial_t \phi_n\leq -1/n$ and 
$$
\Vert \phi_n\Vert_{L^{\infty}}+ \Vert \partial_t \phi_n\Vert_{L^1}+ \Vert \nabla \phi_n \Vert_{L^2} \leq C\;,
$$
and let $\phi$ be a limit of $(\phi_n)$, in the sense that 
\begin{eqnarray*}
&\phi_n \to \phi \mbox{ strongly in $L^1$ and weakly $*$ in $L^{\infty}$},\\
&\partial_t \phi_n \to \partial_t \phi  \mbox{ weakly $*$ in $\M$},\\
&\nabla \phi_n \to \nabla \phi  \mbox{ weakly in $L^2$}.
\end{eqnarray*}
Then  $\phi$ is optimal for  \pref{primal} and
\begin{equation}\label{lim1}
\int_0^1 \int_Q L(\partial_t \phi^{\ac}, \nabla \phi)  =  \lim_n \int_0^1 \int_Q L(\partial_t \phi_n, \nabla \phi_n) dx dt \;,
\end{equation}
\begin{equation}\label{lim2}
\int_Q \phi(0,.) \rho_0-\int_Q \phi(1,.) \rho_1 = \lim_n \Big( \int_Q \phi_n(0,.) \rho_0-\int_Q \phi_n(1,.) \rho_1 \Big)\;.
\end{equation}
Moreover, if we set
\[ A_n=  
\frac{|\nabla \phi_n|^{\frac{2}{\alpha}}}{(-\partial_t\phi_n(t,x))} \; {\rm and } \; 
A= \frac{|\nabla \phi|^{\frac{2}{\alpha}}}{(-\partial_t\phi^{\ac}(t,x))}\]
then  $(A_n^{\frac{\alpha}{2-\alpha}})$ converges to $A^{\frac{\alpha}{2-\alpha}}$ in $L^{\frac{2-\alpha}{1-\alpha}}$. 
\end{lem}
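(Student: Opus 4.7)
The plan is to handle the three assertions in order: (i) optimality of $\phi$, (ii) the term-by-term limits \pref{lim1}--\pref{lim2}, and (iii) the strong $L^p$ convergence of $A_n^{\alpha/(2-\alpha)}$. Parts (i) and (ii) will follow directly from what was already proved for Theorem \ref{existdualdns}. The heart of the argument is (iii), where I plan to combine a convex combination trick with the strict convexity of $L$ and conclude via the Brezis--Lieb lemma. I expect the main obstacle to be the a.e.~convergence of $A_n^{\alpha/(2-\alpha)}$ on the degenerate set $\{\nabla\phi=0\}$, where the strict convexity of $L$ breaks down; this will be treated by a separate Fatou argument.

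For (i)-(ii), the joint lower semicontinuity of the two pieces of $J^{\rel}$ with respect to the convergences \pref{pierre1}--\pref{pierre3}, established in the proof of Theorem \ref{existdualdns}, combined with the minimality of $(\phi_n)$, gives $J^{\rel}(\phi)\leq \liminf_n J(\phi_n)=\inf J^{\rel}$, so that $\phi$ is optimal. Setting $a_n:=\int_0^1\int_Q L(\partial_t\phi_n,\nabla\phi_n)\,dxdt$ and $b_n:=\int_Q \phi_n(0,\cdot)\rho_0-\int_Q \phi_n(1,\cdot)\rho_1$, and denoting by $a,b$ the analogous quantities attached to $\phi$, the two lsc inequalities yield $a\leq \liminf_n a_n$ and $b\leq \liminf_n b_n$, while optimality of $\phi$ and minimality of $(\phi_n)$ give $a+b=\lim_n(a_n+b_n)$. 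Then
\[
\limsup_n a_n \leq \lim_n(a_n+b_n)-\liminf_n b_n \leq (a+b)-b=a,
\]
and symmetrically for $b_n$, which gives \pref{lim1} and \pref{lim2}.

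For (iii), setting $p:=(2-\alpha)/(1-\alpha)>1$, a direct computation yields
\[
\|A_n^{\alpha/(2-\alpha)}\|_{L^p}^p=\int_0^1\!\!\int_Q A_n^{\alpha/(1-\alpha)}\,dxdt=\kappa_\alpha^{-1}\int_0^1\!\!\int_Q L(\partial_t\phi_n,\nabla\phi_n)\,dxdt,
\]
and similarly for $A$, so by \pref{lim1} the $L^p$-norms of $A_n^{\alpha/(2-\alpha)}$ converge to that of $A^{\alpha/(2-\alpha)}$. Since $L^p$ is uniformly convex, it then suffices to establish a.e.~convergence of $A_n^{\alpha/(2-\alpha)}$ to $A^{\alpha/(2-\alpha)}$ (along a subsequence), so that Brezis--Lieb will close the argument. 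To produce this a.e.~convergence, I consider $\psi_n:=(\phi_n+\phi)/2\in K$. Pointwise convexity of $L$ gives $L((\partial_t\psi_n)^{ac},\nabla\psi_n)\leq \tfrac12 L(\partial_t\phi_n,\nabla\phi_n)+\tfrac12 L(\partial_t\phi^{ac},\nabla\phi)$, whose integral tends to $\int L(\partial_t\phi^{ac},\nabla\phi)$ by \pref{lim1}. Since $\psi_n$ is admissible with boundary terms (half-sums of those of $\phi_n$ and $\phi$) converging to those of $\phi$ by \pref{lim2}, optimality of $\phi$ forces $\int L((\partial_t\psi_n)^{ac},\nabla\psi_n) \to \int L(\partial_t\phi^{ac},\nabla\phi)$. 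Hence the nonnegative pointwise convexity defect
\[
D_n:=\tfrac12 L(\partial_t\phi_n,\nabla\phi_n)+\tfrac12 L(\partial_t\phi^{ac},\nabla\phi)-L((\partial_t\psi_n)^{ac},\nabla\psi_n)\geq 0
\]
tends to $0$ in $L^1$, hence to $0$ a.e.~along a subsequence.

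To conclude, I exploit the strict convexity of $L$ on the interior $\{a<0,\,b\neq 0\}$ of its effective domain (where $L$ is the conjugate of a smooth strictly convex $H$) together with its positive $p$-homogeneity with $p>1$. On the ``good'' set $\{\nabla\phi\neq 0\}$ (on which $\partial_t\phi^{ac}<0$ a.e., since otherwise $L=+\infty$ and $J^{\rel}(\phi)=+\infty$), a scaling argument using $p$-homogeneity shows that if $|(\partial_t\phi_n,\nabla\phi_n)(t,x)|\to\infty$ then $D_n(t,x)$ cannot tend to $0$; thus $(\partial_t\phi_n,\nabla\phi_n)(t,x)$ is bounded, and the strict convexity of $L$ at the interior point $(\partial_t\phi^{ac},\nabla\phi)(t,x)$ forces any cluster point to coincide with $(\partial_t\phi^{ac},\nabla\phi)(t,x)$, so $A_n\to A$ a.e.~on this set. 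On the ``degenerate'' set $\{\nabla\phi=0\}$, where $A^{\alpha/(2-\alpha)}=0$, Fatou's lemma applied on the complement combined with the convergence of $\int A_n^{\alpha/(1-\alpha)}$ yields $\int_{\{\nabla\phi=0\}}A_n^{\alpha/(1-\alpha)}\to 0$, which gives a.e.~convergence of $A_n^{\alpha/(2-\alpha)}$ to $0$ there along a further subsequence. Brezis--Lieb then concludes.
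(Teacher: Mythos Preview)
Your treatment of (i)--(ii) is exactly the paper's: joint lower semicontinuity of the two pieces of $J^{\rel}$ plus minimality gives optimality of $\phi$, and the ``sum of two lsc terms equals the limit of the sum'' argument yields \pref{lim1}--\pref{lim2}.

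For (iii) your proof is correct but follows a genuinely different route. The paper never seeks a.e.\ convergence: it passes to a weak $L^{\frac{2-\alpha}{1-\alpha}}$ limit $\tilde A$ of $A_n^{\alpha/(2-\alpha)}$ and observes that, for every continuous positive periodic weight $f$, the functional $J_f(\psi)=\int f\,|\nabla\psi|^{2/(2-\alpha)}/(-\partial_t\psi)^{\alpha/(2-\alpha)}$ is weakly lsc by the very same Rockafellar argument used in the proof of Theorem~\ref{existdualdns}. This gives $\int \tilde A f\ge \int A^{\alpha/(2-\alpha)}f$ for all such $f$, hence $\tilde A\ge A^{\alpha/(2-\alpha)}$ pointwise; combined with $\|A_n^{\alpha/(2-\alpha)}\|_p\to\|A^{\alpha/(2-\alpha)}\|_p$ from \pref{lim1} and weak lsc of the norm, one gets $\tilde A=A^{\alpha/(2-\alpha)}$, and strong convergence follows from weak convergence plus norm convergence in the uniformly convex $L^p$. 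In short, the paper reuses the lsc machinery already in place, with a weight, and stays entirely in the weak framework.

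Your midpoint-defect approach ($\psi_n=(\phi_n+\phi)/2$, $D_n\to 0$ in $L^1$) is a legitimate alternative, trading that soft argument for a pointwise one based on the strict convexity of $L$ off $\{b=0\}$. What it buys is a more tangible picture (actual pointwise convergence of $(\partial_t\phi_n,\nabla\phi_n)$ on $\{\nabla\phi\neq 0\}$). The cost is twofold: first, the step ``$|(\partial_t\phi_n,\nabla\phi_n)|\to\infty\Rightarrow D_n\not\to 0$'' requires more than a one-line appeal to $p$-homogeneity---one must separately handle e.g.\ the case $\partial_t\phi_n\to -\infty$ with $L(\partial_t\phi_n,\nabla\phi_n)$ bounded (where $D_n\to \tfrac12 L(\partial_t\phi^{\ac},\nabla\phi)>0$) and the cases where $L(\partial_t\phi_n,\nabla\phi_n)\to\infty$ (where the midpoint value is asymptotically $2^{-p}L(\partial_t\phi_n,\nabla\phi_n)$); second, you need the extra Fatou step on $\{\nabla\phi=0\}$ and a subsequence-of-subsequence argument to pass from Brezis--Lieb along a subsequence to convergence of the full sequence. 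The paper's argument avoids both complications.
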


\begin{proof} Equalities (\ref{lim1}), (\ref{lim2}) are straightforward consequences of the proof of Theorem \ref{existdualdns}. 
In view of (\ref{lim1}), $(A_n^{\frac{\alpha}{2-\alpha}})$ is bounded in $L^{\frac{2-\alpha}{1-\alpha}}$. Therefore a subsequence, still denoted $(A_n^{\frac{\alpha}{2-\alpha}})$, converges weakly to some $\tilde A\in L^{\frac{2-\alpha}{1-\alpha}}$. Let now $f:[0,1]\times \R^d\to \R$ be a continuous, positive and periodic map. Applying the argument of the proof of Theorem  \ref{existdualdns} to the convex functional
$$
J_f(\psi):= \int_0^1\int_Q \frac{|\nabla \psi|^{\frac{2}{2-\alpha}}}{(-\partial_t\psi(t,x))^{\frac{\alpha}{2-\alpha}}}f(x,t)\ dx\ dt\;,
$$
we get that 
\begin{eqnarray*}
\int_0^1\int_Q  \tilde A f & = & \liminf_n \int_0^1\int_Q A_n^{\frac{\alpha}{2-\alpha}} f\\
& = & \liminf_n J_f(\phi_n) \geq J_f(\phi) = \int_0^1\int_Q A^{\frac{\alpha}{2-\alpha}} f
\end{eqnarray*}
Therefore $\tilde A\geq A^{\frac{\alpha}{2-\alpha}}$. Furthermore, in view of (\ref{lim1}), 
$$
\int_0^1\int_Q A^{\frac{\alpha}{1-\alpha}}= \lim_n \int_0^1\int_Q (A_n^{\frac{\alpha}{2-\alpha}})^{\frac{2-\alpha}{1-\alpha}}\geq   \int_0^1\int_Q \tilde A^{\frac{2-\alpha}{1-\alpha}}
$$
so that $\tilde A= A^{\frac{\alpha}{2-\alpha}}$ and $(A_n^{\frac{\alpha}{2-\alpha}})$ strongly converges to $A^{\frac{\alpha}{2-\alpha}}$ in
$L^{\frac{2-\alpha}{1-\alpha}}$.
\end{proof}

Here is an elementary property of minimizers. 

\begin{prop}\label{prop:esssup} If $\phi$ is optimal for \pref{primalr}, then $t\mapsto {\rm ess-sup}_x \phi(t,x)$ and $t\to {\rm ess-inf}_x \phi(t,x)$
are constant. 
\end{prop}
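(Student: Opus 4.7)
Set $M(t):=\ess_x\phi(t,x)$ and $m(t):=\ei_x\phi(t,x)$; since $\phi\in K$ is non-increasing in $t$, both $M$ and $m$ are non-increasing. The plan is to produce competitors by truncating $\phi$ from above at the level $M(1)$ and from below at the level $m(0)$, and then to exploit the optimality of $\phi$ together with the uniqueness statement of Theorem~\ref{existdualdns} to force such truncations to act trivially.

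I would first rule out the case $m(0)>M(1)$ by taking $\tilde\phi(t,x):=\max(m(0),\phi(t,x))$, which still lies in $K$. On $\{\phi\le m(0)\}$, $\tilde\phi$ is locally constant, so $\nabla\tilde\phi=0$ and $\partial_t\tilde\phi^{\ac}=0$ a.e.\ there, giving $\int L(\partial_t\tilde\phi^{\ac},\nabla\tilde\phi)\le\int L(\partial_t\phi^{\ac},\nabla\phi)$. The initial boundary term is unchanged since $\phi(0,\cdot)\ge m(0)$ a.e., while $\int_Q\tilde\phi(1,\cdot)\rho_1=m(0)>M(1)\ge\int_Q\phi(1,\cdot)\rho_1$. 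Hence $J^{\rel}(\tilde\phi)<J^{\rel}(\phi)$, contradicting the optimality of $\phi$; so $m(0)\le M(1)$.

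I would then take $\tilde\phi(t,x):=\max(m(0),\min(M(1),\phi(t,x)))\in K$. The same local-constancy argument gives $\int L(\partial_t\tilde\phi^{\ac},\nabla\tilde\phi)\le\int L(\partial_t\phi^{\ac},\nabla\phi)$, and the a.e.\ inequalities $\tilde\phi(0,\cdot)\le\phi(0,\cdot)$ (since $\phi(0,\cdot)\ge m(0)$) and $\tilde\phi(1,\cdot)\ge\phi(1,\cdot)$ (since $\phi(1,\cdot)\le M(1)$) show that the boundary terms also go the right way. Thus $J^{\rel}(\tilde\phi)\le J^{\rel}(\phi)$, and by optimality equality holds, so $\tilde\phi$ is a minimizer as well. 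Theorem~\ref{existdualdns} then forces $\tilde\phi=\phi+c$ for some constant $c$. From $\tilde\phi\le M(1)$ a.e.\ one reads off $M(0)\le M(1)-c$; combined with $M(0)\ge M(1)$, this yields $c\le 0$, and symmetrically $\tilde\phi\ge m(0)$ yields $c\ge m(0)-m(1)\ge 0$. Hence $c=0$, $M(0)=M(1)$ and $m(0)=m(1)$, which together with the monotonicity of $M$ and $m$ gives the claim. The main technical point to verify is the identification of the absolutely continuous part of $\partial_t\tilde\phi$ after truncation, which follows from the BV chain rule since $\tilde\phi$ is a $1$-Lipschitz function of $\phi$.
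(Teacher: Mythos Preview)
Your argument is correct and follows the same truncation-plus-uniqueness idea as the paper. The paper's version is marginally simpler: it truncates one side at a time (first $\tilde\phi=\min(M(1),\phi)$, then $\sup(m(0),\phi)$), so the relevant boundary trace is left \emph{unchanged} and the additive constant from Theorem~\ref{existdualdns} is immediately forced to be $0$, which avoids your preliminary case distinction $m(0)>M(1)$.
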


\begin{proof} Let $M= {\rm ess-sup}_x \phi(1,x)$ and let  us consider $\tilde \phi= \inf\{M, \phi\}$. Then one easily checks that 
$$
\int_0^1\int_Q L(\partial_t \tilde \phi^{\ac}, \nabla \tilde \phi)\leq \int_0^1\int_Q L(\partial_t  \phi^{\ac}, \nabla \phi)
$$
while
$$
\int_Q  \tilde \phi(0,\cdot)\rho_0\leq \int_Q  \phi(0,\cdot) \rho_0\; {\rm and}\; 
\int_Q  \tilde \phi(1,\cdot)\rho_1= \int_Q  \phi(1,\cdot) \rho_1.
$$
Therefore $\tilde \phi$ is also optimal for \pref{primalr}, so that by uniqueness $\tilde \phi=\phi$. In particular, 
${\rm ess-sup}_x \phi(t,x)\leq M={\rm ess-sup}_x \phi(1,x)$ for any $t$. Since  $\phi$ is nonincreasing in time, we can conclude that 
$t\to {\rm ess-sup}_x \phi(t,x)$ is constant. The other assertion is proved similarly considering $\sup\{m, \phi\}$ with $m={\rm ess-inf}_x \phi(0,x)$.
\end{proof}

\subsection{Construction of a minimizing sequence for \pref{primal}}\label{aprioriest}

We now prove that there exists a minimizing sequence for \pref{primal} which satisfies the estimates needed in the proof of theorem \ref{existdualdns}: 

\begin{prop}\label{prop:constrphin} If $\alpha>1-2/d$, then there is a minimizing sequence $(\phi_n)$ for \pref{primal} and a constant $C>0$ such that 
$$
\Vert \phi_n\Vert_{L^{\infty}}+ \Vert \partial_t \phi_n\Vert_{L^1}+ \Vert \nabla \phi_n \Vert_{L^2} \leq C.
$$
\end{prop}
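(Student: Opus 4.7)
The strategy is to exploit the invariance $J(\phi+c)=J(\phi)$ -- which holds because $\rho_0$ and $\rho_1$ are probability measures -- and the sign constraint $\partial_t\phi\le 0$ to reduce all three a priori bounds to a single $L^\infty$ bound on $\phi_n$. Starting from an arbitrary $C^1$ minimizing sequence $(\psi_n)$, convolving in time and perturbing by $-t/n$ yields a new sequence, still denoted $(\psi_n)$, with $\partial_t\psi_n\le -1/n$ everywhere and $J(\psi_n)\to\inf J$; I then normalize each $\psi_n$ by an additive constant, for instance by requiring $\int_Q\psi_n(1,\cdot)\,dx=0$.

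Once $\|\psi_n\|_{L^\infty}\le C$ is in hand, the other two bounds follow essentially for free. Time-monotonicity gives
\[
\|\partial_t\psi_n\|_{L^1([0,1]\times Q)}=\int_Q\bigl(\psi_n(0,\cdot)-\psi_n(1,\cdot)\bigr)\,dx\le 2\|\psi_n\|_{L^\infty}.
\]
For the gradient, I write $L$ explicitly as $L(a,b)=\kappa_\alpha\,|b|^{2/(1-\alpha)}/(-a)^{\alpha/(1-\alpha)}$ on $\{a<0\}$ and apply H\"older's inequality with conjugate exponents $1/(1-\alpha)$ and $1/\alpha$:
\[
\int_0^1\!\!\int_Q|\nabla\psi_n|^2\le \kappa_\alpha^{\alpha-1}\Bigl(\int_0^1\!\!\int_Q L(\partial_t\psi_n,\nabla\psi_n)\Bigr)^{1-\alpha}\|\partial_t\psi_n\|_{L^1}^{\alpha}.
\]
The double integral of $L$ satisfies $\int\!\!\int L = J(\psi_n)-\int_Q\psi_n(0)\rho_0+\int_Q\psi_n(1)\rho_1\le J(\psi_n)+2\|\psi_n\|_{L^\infty}$, which is uniformly bounded.

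The real obstacle is therefore the $L^\infty$ bound itself: since the Sobolev embedding $H^1(\T^d)\hookrightarrow L^\infty$ fails for $d\ge 2$, this bound cannot come from the $L^2$ control on $\nabla\psi_n$ alone; it must incorporate geometric information about $\rho_0$ and $\rho_1$. My plan is to compare $(\psi_n)$ with a smooth competitor $\phi^\sharp$ satisfying $J(\phi^\sharp)\le C_0$, built using Lemma~\ref{lem:estica} by concatenating short $W_\alpha$-geodesics joining $\rho_0$ and $\rho_1$ to a common regular reference density on $\T^d$. The resulting inequality $J(\psi_n)\le C_0+1$, combined with the non-negativity of $L$ and a careful integration of the boundary terms against $\rho_0,\rho_1$ using the chosen normalization, controls the oscillation of $\psi_n$ and hence $\|\psi_n\|_{L^\infty}$. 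The threshold $\alpha>1-2/d$ appears precisely as the dimension-dependent condition under which the geodesic-distance estimate of Lemma~\ref{lem:estica} is valid, ensuring that such a finite-energy competitor exists on $\T^d$; without it, no mechanism is available to translate the bound on $J(\psi_n)$ into an $L^\infty$ bound on $\psi_n$, and this is the crux of the whole argument.
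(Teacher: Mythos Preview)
Your reductions from the $L^\infty$ bound to the $L^1$ bound on $\partial_t\psi_n$ and the $L^2$ bound on $\nabla\psi_n$ are correct and coincide with the paper's. The gap is entirely in the $L^\infty$ step, and it is a real one.

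Your plan is to build a competitor $\phi^\sharp$ with $J(\phi^\sharp)\le C_0$, deduce $J(\psi_n)\le C_0+1$, and then ``control the oscillation of $\psi_n$'' from $L\ge 0$ and the boundary terms. But $J(\psi_n)\le C_0+1$ together with $L\ge 0$ only yields
\[
\int_Q\psi_n(0,\cdot)\rho_0-\int_Q\psi_n(1,\cdot)\rho_1\le C_0+1,
\]
an inequality between two \emph{weighted averages}. With merely $\rho_0,\rho_1\in L^1$ (and no lower bound on them), this says nothing about the pointwise oscillation of $\psi_n$; your normalization $\int_Q\psi_n(1,\cdot)=0$ does not help either. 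No amount of ``careful integration of the boundary terms'' will close this gap without a further pointwise mechanism. Incidentally, the existence of a finite-energy competitor is equivalent to $W_\alpha(\rho_0,\rho_1)<\infty$, which is already assumed and does not require $\alpha>1-2/d$; so in your argument the dimensional threshold plays no actual role, which is a warning sign.

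What the paper does is quite different, and this is where Lemma~\ref{lem:estica} is really used. One first observes the sub-characteristic inequality: setting $A_n=\vert\nabla\psi_n\vert^{2/\alpha}/(-\partial_t\psi_n)$, for \emph{every} curve $\gamma$ the map
\[
t\mapsto \psi_n(t,\gamma(t))-d_\alpha\int_0^t A_n(s,\gamma(s))^{\frac{\alpha}{2-\alpha}}\vert\dot\gamma(s)\vert^{\frac{2}{2-\alpha}}\,ds
\]
is nonincreasing (this is an elementary Young-type computation). Hence $\psi_n(1,y)\le\psi_n(0,x)+d_\alpha\,c_{A_n}(x,y)$ for all $x,y$. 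Lemma~\ref{lem:estica} is then applied with $a=A_n$ (not to build a competitor for the dual) to bound $c_{A_n}(x,y)$ by $C\bigl(\int\!\!\int A_n^{\alpha/(1-\alpha)}\bigr)^{(1-\alpha)/(2-\alpha)}$, uniformly in $x,y$; this is exactly where $\alpha>1-2/d$ enters. Choosing the normalization $\min_x\psi_n(0,x)=0$ gives the \emph{pointwise} bound $\psi_n(1,\cdot)\le M_n:=C\bigl(\kappa_\alpha^{-1}\int\!\!\int L\bigr)^{(1-\alpha)/(2-\alpha)}$. Post-composing $\psi_n$ with suitable smooth nondecreasing $1$-Lipschitz functions (which does not increase $J$) then forces $-1/n\le\psi_n\le 2M_n$. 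Finally one closes the loop: plugging these $L^\infty$ bounds into $J(\psi_n)\le C$ yields
\[
\kappa_\alpha\int\!\!\int A_n^{\frac{\alpha}{1-\alpha}}\le C + C\Bigl(\int\!\!\int A_n^{\frac{\alpha}{1-\alpha}}\Bigr)^{\frac{1-\alpha}{2-\alpha}},
\]
and since $(1-\alpha)/(2-\alpha)<1$ this gives a uniform bound on $\int\!\!\int A_n^{\alpha/(1-\alpha)}=\kappa_\alpha^{-1}\int\!\!\int L$, hence on $M_n$, hence on $\Vert\psi_n\Vert_{L^\infty}$. The missing idea in your proposal is precisely this characteristic-curve comparison coupling $\psi_n(1,\cdot)$ to $\psi_n(0,\cdot)$ through the cost $c_{A_n}$.
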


The proof relies on an integral estimate for geodesic distance. Let $\alpha\in (0,1)$ and $a:[0,1]\times \R^d\to \R$ be a smooth, positive and $\Z^d-$periodic map.  For $0\leq s<t\leq 1$ and any $x,y\in Q^2$, we consider the minimization problem
\begin{equation}\label{defca}
c_a((s,x),(t,y))=\inf_{\gamma} \int_s^t (a(\tau,\gamma(\tau)))^{\frac{\alpha}{2-\alpha}}\left|\dot\gamma(\tau)\right|^{\frac{2}{2-\alpha}}d\tau
\end{equation}
where the infimum is taken over the set   of 
all $W^{1,\frac{2}{2-\alpha}}$ maps $\gamma:[s,t]\to \R^d$ such that $\gamma(s)=x$ and $\gamma(t)=y$. It will be convenient for later use
to introduce the set $\Gamma((s,x),(t,y))$ of  continuous maps $\gamma:[s,t]\to \R^d$ such that $\gamma(s)=x$ and $\gamma(t)=y$ and to extend
the infimum in (\ref{defca}) to $\Gamma((s,x),(t,y))$ by setting
$$
 \int_s^t (a(\tau,\gamma(\tau)))^{\frac{\alpha}{2-\alpha}}\left|\dot\gamma(\tau)\right|^{\frac{2}{2-\alpha}}d\tau=+\infty
 \quad {\rm if }\; \gamma\notin W^{1,\frac{2}{2-\alpha}}.
 $$
 For simplicity, we denote respectively by $c_a(x,y)$ and $\Gamma(x,y)$ the quantity $c_a((0,x),(1,y))$ and the set $\Gamma((0,x),(1,y))$.

\begin{lem}\label{lem:estica} If $\alpha>1-2/d$, then, for any $\sigma\in (0, \frac{2-d(1-\alpha)}{(2-\alpha)})$, 
\begin{equation}\label{crucialest}
c_a((s,x),(t,y))\leq C \left[\int_s^t \! \int_{Q} (a(s,y))^{\frac{\alpha}{1-\alpha}}dyds\right]^{\frac{1-\alpha}{2-\alpha}} \frac{|x-y|^\sigma}{(t-s)^{\frac{1}{2-\alpha}}}
\end{equation}
where $C=C(d,\alpha,\sigma)$. 
\end{lem}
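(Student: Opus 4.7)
The strategy is to bound $c_a((s,x),(t,y))$ by averaging the cost of a carefully chosen one-parameter family of admissible paths from $x$ to $y$, and then to use the $\Z^d$-periodicity of $a$ via a change of variables to introduce the full spatial integral $\int_Q a^{\alpha/(1-\alpha)}(\tau,\cdot)$. A translation in time reduces the problem to $s=0$, $t=T$. Introduce smooth profiles $h,\tilde\psi:[0,1]\to[0,1]$ with $h(0)=0$, $h(1)=1$, $\tilde\psi(0)=\tilde\psi(1)=0$, $\tilde\psi\equiv 1$ on $[1/3,2/3]$, and satisfying $\tilde\psi(\theta)\sim\theta^{\beta}$ and $h'(\theta)\sim\theta^{\gamma}$ near $\theta=0$ (symmetric behaviour at $\theta=1$) for exponents $\beta,\gamma>0$ to be chosen. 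For a scale parameter $\lambda\in(0,1]$ set
\[
\gamma_z(\tau)\;:=\;x+(y-x)\,h(\tau/T)+\lambda\,\tilde\psi(\tau/T)\,z,\qquad z\in Q,
\]
so that $\gamma_z\in\Gamma((0,x),(T,y))$ for every $z$, and $z\mapsto\gamma_z(\tau)$ is affine with Jacobian $(\lambda\tilde\psi(\tau/T))^d$.

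Write $\psi(\tau):=\lambda\tilde\psi(\tau/T)$. Applying Hölder's inequality to the cost of $\gamma_z$ with conjugates $r_1=\tfrac{2-\alpha}{1-\alpha}$, $r_2=2-\alpha$ after inserting the weight $\psi^{d(1-\alpha)/(2-\alpha)}$ in the first factor and its reciprocal in the second yields
\begin{align*}
\int_0^T\!a^{\alpha/(2-\alpha)}(\tau,\gamma_z)|\dot\gamma_z|^{2/(2-\alpha)}d\tau
&\le\Big(\int_0^T\!a^{\alpha/(1-\alpha)}(\tau,\gamma_z)\,\psi^{d}\,d\tau\Big)^{(1-\alpha)/(2-\alpha)}\\
&\quad\cdot\Big(\int_0^T\!|\dot\gamma_z|^{2}\,\psi^{-d(1-\alpha)}\,d\tau\Big)^{1/(2-\alpha)}.
\end{align*}
Since $c_a$ is at most the cost of each $\gamma_z$, it is at most the mean over $z\in Q$. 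Averaging the first factor and using Jensen for the concave power $(1-\alpha)/(2-\alpha)$, the change of variables $u=\gamma_z(\tau)$ combined with $\Z^d$-periodicity of $a$ and $\psi\le 1$ give $\int_Q a^{\alpha/(1-\alpha)}(\tau,\gamma_z)\,dz\le\psi(\tau)^{-d}\!\int_Q a^{\alpha/(1-\alpha)}(\tau,u)\,du$, and the weight $\psi^d$ exactly cancels this Jacobian. The first factor is therefore bounded by $\bigl[\int_0^T\!\int_Q a^{\alpha/(1-\alpha)}\bigr]^{(1-\alpha)/(2-\alpha)}$, as desired.

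For the second factor, one computes $\int_Q|\dot\gamma_z|^2\,dz\le C(|y-x|^2|h'(\tau/T)|^2/T^2+\lambda^2|\tilde\psi'(\tau/T)|^2/T^2)$; the substitution $\tau=T\theta$ then gives an integrand of the form $T^{-1}(|y-x|^2\lambda^{-d(1-\alpha)}I_1+\lambda^{2-d(1-\alpha)}I_2)$, where $I_1=\int_0^1\tilde\psi^{-d(1-\alpha)}(h')^2\,d\theta$ and $I_2=C\int_0^1\tilde\psi^{-d(1-\alpha)}(\tilde\psi')^2\,d\theta$. Near $\theta=0$ these integrands behave like $\theta^{2\gamma-\beta d(1-\alpha)}$ and $\theta^{\beta(2-d(1-\alpha))-2}$, so $I_1,I_2<\infty$ provided $\beta(2-d(1-\alpha))>1$ and $\beta d(1-\alpha)<2\gamma+1$. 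Under the standing assumption $\alpha>1-2/d$, i.e.\ $2-d(1-\alpha)>0$, these two conditions are compatible: pick $\beta$ slightly above $1/(2-d(1-\alpha))$, then $\gamma$ large enough. Minimizing the resulting bracket in $\lambda\in(0,1]$ by $\lambda=\min(c|y-x|,1)$ balances the two terms and yields a bound $\le C|y-x|^{2-d(1-\alpha)}T^{-1}$, whose $1/(2-\alpha)$-power is $C|y-x|^{(2-d(1-\alpha))/(2-\alpha)}T^{-1/(2-\alpha)}$. Since $|y-x|\le\sqrt d/2$ is bounded, the exponent $(2-d(1-\alpha))/(2-\alpha)$ may be replaced by any strictly smaller $\sigma$, proving (\ref{crucialest}).

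The main obstacle is the joint control, in the analysis of the second factor, of the Jacobian blow-up $\psi^{-d}$ (which pushes $\beta$ small) and of the contribution of $\tilde\psi'$ to the kinetic energy (which pushes $\beta$ large). The threshold $\alpha>1-2/d$ is precisely what makes the admissible range of $\beta$ non-empty, and the strict inequality $\sigma<(2-d(1-\alpha))/(2-\alpha)$ reflects the blow-up of $I_2$ as $\beta\searrow 1/(2-d(1-\alpha))$.
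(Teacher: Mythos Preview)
Your argument is correct and takes a genuinely different route from the paper's. The paper averages over a \emph{ball} of paths written in polar coordinates $(r,\sigma)\in[0,1]\times S^{d-1}$, uses three-piece paths (radial spreading on $[0,\delta]$, straight on $[\delta,1-\delta]$, radial contraction on $[1-\delta,1]$), applies H\"older separately on each piece, and then optimizes over the time-scale $\delta$; the H\"older exponent it obtains is $\sigma(\theta)=\frac{2}{2-\alpha}\bigl(1-\frac{d\theta(1-\alpha)}{2\theta-1}\bigr)$, which only approaches the critical value $(2-d(1-\alpha))/(2-\alpha)$ as $\theta\to\infty$. You instead average over a \emph{cube} $z\in Q$, use a single smooth profile pair $(h,\tilde\psi)$, insert the weight $\psi^{\pm d(1-\alpha)/(2-\alpha)}$ and apply H\"older once globally so that the Jacobian $\psi^{-d}$ from the change of variables $u=\gamma_z(\tau)$ is exactly cancelled by the weight, and optimize over the spatial scale $\lambda$. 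This is cleaner and in fact sharper: after the optimization $\lambda\sim|x-y|$ you reach the endpoint exponent $(2-d(1-\alpha))/(2-\alpha)$ itself, and only downgrade to $\sigma<\sigma_0$ at the very end using the trivial boundedness of $|x-y|$. One expository point: the step you call ``Jensen for the concave power'' is really a second application of H\"older in the $z$ variable (or, equivalently, a single H\"older in $(\tau,z)$ with the same exponents $\tfrac{2-\alpha}{1-\alpha}$ and $2-\alpha$), which is what allows you to separate $\int_Q A_z\,dz$ from $\int_Q B_z\,dz$; stating it this way would make the passage from the pointwise bound $c_a\le A_z^{(1-\alpha)/(2-\alpha)}B_z^{1/(2-\alpha)}$ to the product $(\int_Q A_z)^{(1-\alpha)/(2-\alpha)}(\int_Q B_z)^{1/(2-\alpha)}$ transparent.
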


\begin{proof}  We start by proving the result for $s=0$, $t=1$. The general case is obtained by scaling. 
Let $x,y\in Q$.
Since $\alpha>1-2/d$ we can fix $\theta>0$ sufficiently large such that 
$$
\alpha > \frac{1+d\theta-2\theta}{d\theta}\;.
$$
Throughout the proof $C$ denotes a constant depending on $d$, $\alpha$ and  $\theta$, which may change from line to line. 

Let $\Pi$ be the set of Borel probability measures on $\Gamma(x,y)$ endowed with the $C^0$-distance. We first note that
$$
c_a(x,y)=\inf_{\pi\in \Pi} \int_{\Gamma(x,y)}\int_0^1 (a(s,\gamma(s)))^{\frac{\alpha}{2-\alpha}}\left|\dot\gamma(s)\right|^{\frac{2}{2-\alpha}}dsd\pi(\gamma)
$$
For any $\delta\in(0,1/3)$, $r\in [0,1]$ and $\sigma\in S^d$ ($S^d$ being the unit sphere of $\R^d$), we set
$$
\gamma^\delta_{r,\sigma}(s)= \left\{\begin{array}{ll}
x+r\sigma s^\theta & {\rm if }\; s\in [0,\delta]\\
x+r\sigma \delta^\theta + \frac{(y-x)(s-\delta)}{1-2\delta} & {\rm if }\; s\in [\delta,1-\delta]\\
y+r\sigma (1-s)^\theta & {\rm if }\; s\in [1-\delta,1]
\end{array}\right.
$$
and we define the probability measure $\pi^\delta\in \Pi$ by
$$
\int_{\Gamma(x,y)}\Phi(\gamma) d\pi^\delta(\gamma) = \int_0^1 \int_{S^d} \Phi(\gamma^\delta_{r,\sigma}) \frac{d}{|S^d|}r^{d-1}drd\sigma
$$
for any Borel measurable nonnegative map $\Phi:  \Gamma(x,y)\to \R$. 
Then 
$$
\begin{array}{rl}
c_a(x,y)\; \leq & \ds{ \int_0^1\!\!\!\int_0^1\!\!\!\int_{S^d}  (a(s,\gamma^\delta_{r,\sigma}(s)))^{\frac{\alpha}{2-\alpha}}\left|\dot\gamma^\delta_{r,\sigma}(s)\right|^{\frac{2}{2-\alpha}}
\frac{d}{|S^d|}r^{d-1} d\sigma dr ds  }\\
\leq & \ds{ I_1+I_2+I_3  }
\end{array}
$$
where 
$$
I_1= \int_0^{\delta}\!\!\!\int_0^1\!\!\!\int_{S^d}  (a(s,x+r\sigma s^\theta ))^{\frac{\alpha}{2-\alpha}}\left| r \theta s^{\theta-1} \right|^{\frac{2}{2-\alpha}} \frac{d}{|S^d|}r^{d-1} d\sigma dr ds \;,
$$
\begin{eqnarray*}
I_2 & = & \int_{\delta}^{1-\delta}\!\!\!\!\int_0^1\!\!\!\int_{S^d}  \left[a(s,x+r\sigma \delta^\theta + \frac{(y-x)(s-\delta)}{1-2\delta})\right]^{\frac{\alpha}{2-\alpha}}\times\\
& \qquad & \times\left|\frac{(x-y)}{1-2\delta}\right|^{\frac{2}{2-\alpha}}\frac{d}{|S^d|}r^{d-1} 
d\sigma dr  ds 
\end{eqnarray*}
and 
$$
I_3=\int_{1-\delta}^1\!\int_0^1\!\!\!\int_{S^d}  (a(s,y+r\sigma (1-s)^\theta))^{\frac{\alpha}{2-\alpha}}\left| r \theta (1-s)^{\theta-1} \right|^{\frac{2}{2-\alpha}} \frac{d}{|S^d|}r^{d-1} d\sigma dr  ds .
$$
Let us first estimate $I_2$. 
If we set $x_s= x+ (y-x)(3s-1)$ for $s\in [1/3,2/3]$, we have, by change of variables and by H\"older inequality, 
$$
\begin{array}{rl}
I_2\; = & \ds{  C\left|x-y\right|^{\frac{2}{2-\alpha}} \delta^{-d\theta} \int_{\delta}^{1-\delta} \int_{B(x_s, \delta^\theta)}
(a(s,z) )^{\frac{\alpha}{2-\alpha}} \ dz\ ds  }\\
\leq & \ds{  C\left|x-y\right|^{\frac{2}{2-\alpha}}\delta^{-d\theta} \left[\int_0^1\int_{{\mathcal C}(x,y)} (a(s,y))^{\frac{\alpha}{1-\alpha}}dyds\right]^{\frac{1-\alpha}{2-\alpha}} \delta^{\frac{\theta d}{2-\alpha}} }\\
\leq & \ds{  C\left|x-y\right|^{\frac{2}{2-\alpha}}\delta^{-\frac{d\theta(1-\alpha)}{2-\alpha}} \left[\int_0^1\int_{Q} (a(s,y))^{\frac{\alpha}{1-\alpha}}dyds\right]^{\frac{1-\alpha}{2-\alpha}} }
\end{array}
$$
where ${\mathcal C}(x,y)=\bigcup_{s\in [\delta,1-\delta]} B(x_s,\delta^\theta)\subset [-M,M]$ for some $M\in \N^*$ independent of $\delta$, $r$ and $\sigma$ (note then that the last inequality comes from the periodicity of $a$).

\noindent 
We now estimate $I_1$. By change of variable and H\"older inequality, we have: 
$$
\begin{array}{rl}
I_1 \; = & \ds{  C \int_0^{\delta}\int_{B(x, s^\theta)}   (a(s,z  ))^{\frac{\alpha}{2-\alpha}} \left|\frac{z-x}{s^\theta}\right|^{\frac{2}{2-\alpha}}  s^{-d\theta+2\frac{\theta-1}{2-\alpha}} dz  ds }\\
 = & \ds{  C \int_0^{\delta}\int_{B(x, s^\theta)}   (a(s,z  ))^{\frac{\alpha}{2-\alpha}} \left|z-x\right|^{\frac{2}{2-\alpha}}  s^{-d\theta-\frac{2}{2-\alpha}}  dz  ds }\\
\leq & \ds{ C \left[\int_0^\delta \int_{B(x,\delta^\theta)} (a(s,y))^{\frac{\alpha}{1-\alpha}}dyds\right]^{\frac{(1-\alpha)}{(2-\alpha)}}}\times\\
& \qquad {\ds \times\left[ \int_0^\delta\int_{B(x, s^\theta)} |z-x|^2  s^{-d\theta(2-\alpha)-2}  dz  ds \right]^{\frac{1}{(2-\alpha)}} }\\
\end{array}
$$
where the last term is given by
$$
\begin{array}{rl}
\ds{  \int_0^\delta \int_{B(x, s^\theta)} |z-x|^2 s^{-d\theta(2-\alpha)-2} dz  ds  }
\; = & 
\ds{  C\int_0^\delta\int_0^{s^\theta} r^{1+d}  s^{-d\theta(2-\alpha)-2}  dz  ds  }\\
=& 
\ds{  C\int_0^\delta s^{\theta(2+d) -d\theta(2-\alpha)-2 }  ds  }\\
= & \ds{  C \delta^{ \theta(2+d) -d\theta(2-\alpha)-1} }
\end{array}
$$
Note that $\theta(2+d) -d\theta(2-\alpha)-1>0$ thanks to the choice of $\theta$. The term $I_3$ can be estimated in the same way.Therefore 
\begin{eqnarray*}
c_a(x,y) & \leq & C \left[\int_0^1 \int_{Q} (a(s,y))^{\frac{\alpha}{1-\alpha}}dyds\right]^{\frac{(1-\alpha)}{(2-\alpha)}}\times\\
& & \qquad \times\left[ \delta^{ \frac{\theta(2+d) -d\theta(2-\alpha)-1}{(2-\alpha)}} + \left|x-y\right|^{\frac{2}{(2-\alpha)}}\delta^{-\frac{d\theta(1-\alpha)}{(2-\alpha)}} \right]
\end{eqnarray*}
Choosing $\ds \delta= \frac{|x-y|^{\frac{2}{2\theta-1}}}{4(\text{diam}(Q))^{\frac{2}{2\theta-1}}}\in (0,1/3)$, we get
$$
c_a(x,y)\; \leq \; C \left[\int_0^1 \int_{Q} (a(s,y))^{\frac{\alpha}{1-\alpha}}dyds\right]^{\frac{(1-\alpha)}{(2-\alpha)}}
\left|x-y\right|^{\frac{2}{2-\alpha}\left(1-\frac{d\theta(1-\alpha)}{2\theta-1}\right)}
$$
where the exponent
$$
\sigma=\sigma(\theta)=\frac{2}{2-\alpha}\left(1-\frac{d\theta(1-\alpha)}{2\theta-1}\right)
$$
is positive from the choice of $\theta$ and that its limit as $\theta\to+\infty$ is $\frac{2-d(1-\alpha)}{2-\alpha}$.

\medskip
It remains to check the general case. Let us notice that 
$$
c_a((s,x),(t,y))=(t-s)^{\frac{\alpha}{2-\alpha}}c_{\tilde a}(x,y)
$$
where $\tilde a(\tau,y)= a(s+\tau(t-s),y)$. Therefore 
$$
c_a((s,x),(t,y))\leq (t-s)^{\frac{\alpha}{2-\alpha}} C
 \left[\int_0^1 \int_{Q} (\tilde a(\tau,y))^{\frac{\alpha}{1-\alpha}}dyd\tau \right]^{\frac{(1-\alpha)}{(2-\alpha)}}
|x-y|^\sigma
$$
where 
$$
 \left[\int_0^1 \int_{Q} (\tilde a(\tau,y))^{\frac{\alpha}{1-\alpha}}dyd\tau \right]^{\frac{(1-\alpha)}{(2-\alpha)}}
= 
(t-s)^{-\frac{1-\alpha}{2-\alpha}} \left[\int_s^t \int_{Q} (\tilde a(\tau,y))^{\frac{\alpha}{1-\alpha}}dyd\tau \right]^{\frac{(1-\alpha)}{(2-\alpha)}}
$$
This gives the result. 
\end{proof}

\begin{proof}[Proof of proposition \ref{prop:constrphin}]
Let $\phi_n$ be a minimizing sequence for \pref{primal}. Without loss of generality we can assume that 
$$
\partial_t \phi_n\leq -\frac{1}{n}, \qquad \min_{x\in Q} \phi_n(0,x)=0\qquad \forall n\in \N\;.
$$
Let us set 
$$
A_n(t,x)= \frac{|\nabla\phi_n(t,x)|^{2/\alpha}}{(-\partial_t \phi_n(t,x))}\;.
$$
Then, if we set
$$
d_\alpha= \frac{(2-\alpha)\alpha^{\frac{\alpha}{2-\alpha}}}{2^{\frac{2}{2-\alpha}}},
$$
we have that, for any curve $\gamma:[0,1]\to \R^d$,
$$
\begin{array}{l}
\ds{ \frac{d}{dt} \left(\phi_n(t,\gamma(t)) -d_\alpha\int_0^t (A_n(s,\gamma(s)))^{\frac{\alpha}{2-\alpha}}\left|\dot \gamma(s)\right|^{\frac{2}{2-\alpha}} ds\right)  }\\  
\qquad = \; \ds{  \partial_t \phi_n+ \lg \nabla\phi_n, \dot \gamma\rg -d_\alpha A_n^{\frac{\alpha}{2-\alpha}}\left|\dot \gamma\right|^{\frac{2}{2-\alpha}}   }   \\
\qquad = \; \ds{  \partial_t \phi_n+  \lg \nabla\phi_n, \dot \gamma\rg 
-d_\alpha\frac{|\nabla\phi_n|^{\frac{2}{2-\alpha}}}{(-\partial_t \phi_n)^{\frac{\alpha}{2-\alpha}} }\left|\dot \gamma \right|^{\frac{2}{2-\alpha}}   }\\
\qquad \leq \; \ds{   \sup_{b\in \R^d} \left( \partial_t \phi_n+  \lg \nabla\phi_n, b\rg 
-d_\alpha\frac{|\nabla\phi_n|^{\frac{2}{2-\alpha}}}{(-\partial_t \phi_n)^{\frac{\alpha}{2-\alpha}} }\left|b\right|^{\frac{2}{2-\alpha}}  \right)
\ \leq \ 0\;. }
\end{array}
$$
Therefore, for any $x,y\in Q$, 
$$
\phi_n(1,y)\leq \inf_{\gamma} \left( \phi_n(0,\gamma(0))+d_\alpha\int_0^1 (A_n(s,\gamma(s)))^{\frac{\alpha}{2-\alpha}}\left|\dot \gamma(s)\right|^{\frac{2}{2-\alpha}} ds\right)
$$
 where the infimum is taken over absolutely continuous curves such that $\gamma(1)=y$. In view of Lemma \ref{lem:estica}, for any $\sigma\in (0, \frac{2-d(1-\alpha)}{(2-\alpha)})$, we have 
$$
\phi_n(1,y)\leq \inf_{x\in Q} \left( \phi_n(0,x)+ C \left[\int_0^1\int_{Q} (A_n(s,z))^{\frac{\alpha}{1-\alpha}}dzds\right]^{\frac{1-\alpha}{2-\alpha}} |x-y|^\sigma\right)
$$
for some constant $C=C(d,\alpha,\sigma)$. Since $\min_{x\in Q}  \phi_n(0,x)=0$, we finally get
$$
\phi_n(1,y)\leq C(d,\alpha,\sigma)\left[\int_0^1\int_{Q} (A_n(s,z))^{\frac{\alpha}{1-\alpha}}dzds\right]^{\frac{1-\alpha}{2-\alpha}}\;.
$$

Let now $\xi_n:\R\to \R$ be a smooth function such that $0<\xi_n'\leq 1$, $\xi_n(\tau)=\tau$ for $\tau\geq 0$ and $\xi_n\geq -1/n$. From now on we replace $\phi_n$ by $\xi_n\circ \phi_n$: the new sequence $(\phi_n)$ is still minimizing and
\begin{equation}\label{estiphi1}
-\frac{1}{n}\leq \phi_n(1,y)\leq M_n:=C(d,\alpha,\sigma)\left[\int_0^1\int_{Q} (A_n(s,z))^{\frac{\alpha}{1-\alpha}}dzds\right]^{\frac{1-\alpha}{2-\alpha}}.
\end{equation}

Next we modify $\phi_n(0,\cdot)$. Let $\zeta_n:\R\to \R$ be such that $0<\zeta_n' \leq 1$ with $\zeta_n(\tau)=\tau$ for $\tau\leq M_n$ and $\zeta_n\leq 2M_n$. Replacing $\phi_n$ by   $\zeta_n\circ \phi_n$, we get again a minimizing sequence such that 
$$
-\frac{1}{n}\leq \phi_n(s,y)\leq 2M_n\qquad \forall (s,y)\in [0,1]\times Q\;.
$$
It remains to prove that $M_n$ is bounded: since $\phi_n$ is minimizing, we have, for $n$ sufficiently large
\begin{eqnarray*} 
-W_\alpha^2(\rho_0,\rho_1) +1& \!\geq & 
\!\!\kappa_\alpha  \int_0^1\int_{Q}\! \left(\frac{|\nabla \phi_n|^{\frac{2}{\alpha}}}{(-\partial_t \phi_n)}\right)^{\frac{\alpha}{1-\alpha}}
+\int_Q \!\phi_n(0)d\rho_0 -\int_Q \!\phi_n(1)d\rho_1 \\
& \!\geq &  \!\!\kappa_\alpha  \int_0^1\int_{Q} \left(A_n \right)^{\frac{\alpha}{1-\alpha}}
-C\left[\int_0^1\int_{Q} (A_n)^{\frac{\alpha}{1-\alpha}}\right]^{\frac{1-\alpha}{2-\alpha}} ,
\end{eqnarray*}
thanks to (\ref{estiphi1}), $\phi_n(0,\cdot)\geq 0$ and the fact $\rho_1$ is a probability measure. Since $\frac{1-\alpha}{2-\alpha}<1$, we obtain that 
$\int_0^1\int_{Q} (A_n)^{\frac{\alpha}{1-\alpha}}$ is bounded, so that $M_n$ is also bounded. Replacing finally $\phi_n$ by $\phi_n+1/n$ gives that 
$$
0\leq \phi_n \leq C.
$$
In order to conlude, let us now show that 
$$
 \Vert \partial_t \phi_n\Vert_{L^1}+ \Vert \nabla \phi_n \Vert_{L^2} \leq C.
 $$
Indeed, by H\"{o}lder inequality, we have
$$
\int_0^1\int_Q |\nabla\phi_n|^2 \leq
\left[ \int_0^1\int_Q \left(\frac{|\nabla\phi_n(t,x)|^{\frac{2}{\alpha}}}{(-\partial_t \phi_n(t,x))}\right)^{\frac{\alpha}{1-\alpha}}\right]^{1-\alpha}
\left[ \int_0^1\int_Q |\partial_t\phi_n| \right]^\alpha
$$
where the right-hand side is bounded since
$$
\int_0^1\int_Q |\partial_t\phi_n| = \int_0^1\int_Q (-\partial_t\phi_n) = \int_Q \phi_n(0)d\rho_0-\int_Q\phi_n(1)d\rho_1 \leq C.
$$
\end{proof}

\section{Optimality conditions}\label{optimalcondi}

Our aim now is to use the duality between \pref{primalr} and \pref{dns} to write necessary and sufficient optimality conditions in the form of a (sort of) system of PDEs. To achieve this goal we have to be able to multiply $\rho$ by $\partial_t \phi^{\ac}$ and $w$ by $\nabla \phi$, since a priori  $\partial_t\phi^{\ac}$ is only $L^1$ we shall need a uniform  in time $L^{\infty}$ estimate on $\rho_t$ when $\rho_0$ and $\rho_1$ are $L^\infty$. This estimate will follow from a generalized displacement convexity argument of Carrillo, Lisini, Savar\'e and Slepcev \cite{clss}.  This estimate will also be  useful to treat the term
\[w\cdot \nabla \phi=\frac{w}{\rho^{\alpha/2}} \rho^{\alpha/2} \nabla \phi\]
if the functional in \pref{dns} is finite, one has $\frac{w}{\rho^{\alpha/2}} \in L^2$ and  $\rho^{\alpha/2} \nabla \phi \in L^2$ whenever $\nabla \phi\in L^2$ which ensures the summability of $w\cdot \nabla \phi$.

\begin{thm}\label{carrillo}
If $\rho_0$ and $\rho_1$ belong to $L^{\infty}(\T^d)$, then $W_{\alpha}(\rho_0, \rho_1)$ is finite and the infimum in \pref{dns} is achieved by a pair $(\rho, w)$, disintegrating $\rho$ as $d\rho=\rho_t \otimes dt$ we also have that $\rho_t \in L^{\infty}$ for every $t$ with
\begin{equation}\label{linfiniestim}
\Vert \rho_t \Vert_{L^{\infty}} \leq \max(\Vert \rho_0\Vert_{L^{\infty}}, \Vert \rho_1\Vert_{L^{\infty}})
\end{equation}
and $w\in L^2$.

\noindent
Furthermore, if $d=1$, and if there exists $C>0$ such that
$$
\rho_0,\rho_1\,\geq\, C \mbox{ a.e. on } \T^d,
$$
then $\rho_t\geq C$ a.e. on $\T^d$, for every $t$.
\end{thm}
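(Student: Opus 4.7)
The plan is to establish the four assertions in order, the two bounds on $\rho_t$ resting on the generalized displacement-convexity results of \cite{clss} along $W_\alpha$-geodesics.

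First I would show $W_\alpha(\rho_0,\rho_1)<+\infty$ by building an explicit competitor from the $2$-Wasserstein geodesic. If $(\mu_t,v_t)$ denotes the Benamou--Brenier geodesic joining $\rho_0$ to $\rho_1$ and we set $w_t:=\mu_tv_t$, then $(\mu_t,w_t)$ solves the continuity equation \pref{cont}, and writing $w_t=\mu_t^{\alpha}\tilde v_t$ with $\tilde v_t:=\mu_t^{1-\alpha}v_t$ gives
\[
\int_0^1\!\!\int_Q H(\mu_t,w_t)\,dx\,dt \;=\;\int_0^1\!\!\int_Q \mu_t^{2-\alpha}|v_t|^2\,dx\,dt\;\leq\; M^{1-\alpha}\,W_2(\rho_0,\rho_1)^2,
\]
where $M:=\max(\|\rho_0\|_{L^\infty},\|\rho_1\|_{L^\infty})$ and McCann's classical displacement convexity of $\rho\mapsto\|\rho\|_{L^p}^p$ along $W_2$-geodesics (passed to $p\to+\infty$) is used to bound $\|\mu_t\|_{L^\infty}$ by $M$. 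With $W_\alpha(\rho_0,\rho_1)<+\infty$ and $\rho_0,\rho_1\in L^1(\T^d)$, Proposition~\ref{dualitydns} then delivers a minimizer $(\rho,w)$ of \pref{dns}.

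Next, to prove \pref{linfiniestim}, I would argue that the disintegrated curve $t\mapsto\rho_t$ associated to this minimizer is a minimizing geodesic for $W_\alpha$, so the CLSS machinery applies. For every $p$ sufficiently large the internal energy $\mathcal F_p(\rho):=\int_Q\rho^p\,dx$ meets the displacement-convexity criterion of \cite{clss} relative to the mobility $m(\rho)=\rho^\alpha$, hence $t\mapsto\mathcal F_p(\rho_t)$ is convex on $[0,1]$. This yields $\|\rho_t\|_{L^p}\leq\max(\|\rho_0\|_{L^p},\|\rho_1\|_{L^p})$, and letting $p\to+\infty$ gives \pref{linfiniestim}. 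Once the $L^\infty$ bound is known, the integrability $w\in L^2$ follows immediately from H\"older's inequality: as $H(\rho^{\ac},w)<+\infty$ a.e.\ forces $w=0$ wherever $\rho^{\ac}=0$,
\[
\int_0^1\!\!\int_Q |w|^2\,dx\,dt \;=\;\int_0^1\!\!\int_Q (\rho^{\ac})^{\alpha}\,H(\rho^{\ac},w)\,dx\,dt\;\leq\;M^{\alpha}\int_0^1\!\!\int_Q H(\rho^{\ac},w)\,dx\,dt \;<\;+\infty.
\]

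For the final assertion, in dimension one I would apply the same displacement-convexity philosophy to a functional $\mathcal G(\rho):=\int_Q G(\rho)\,dx$ chosen to detect values of $\rho$ below $C$. A natural candidate is $G(r):=(C-r)_+^q$ with $q$ large enough (or $G(r):=r^{-s}$ after a truncation argument): the one-dimensional compatibility conditions of \cite{clss} cover such $G$, so $t\mapsto\mathcal G(\rho_t)$ is convex along the optimal geodesic. Since $\rho_0,\rho_1\geq C$ a.e.\ gives $\mathcal G(\rho_0)=\mathcal G(\rho_1)=0$, convexity forces $\mathcal G(\rho_t)\equiv 0$, that is $\rho_t\geq C$ a.e.\ on $\T^d$ for every $t\in[0,1]$. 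The main technical hurdle in this program is verifying that the specific functionals $\mathcal F_p$ (for $p$ arbitrarily large) and $\mathcal G$ fall within the scope of the CLSS displacement-convexity theorems for the power mobility $m(\rho)=\rho^\alpha$, and that the minimizer furnished by Proposition~\ref{dualitydns} qualifies as a minimizing geodesic to which those theorems apply; the remainder reduces to the elementary manipulations above.
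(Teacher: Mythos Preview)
Your proposal is correct and follows essentially the same route as the paper: finiteness plus existence via Proposition~\ref{dualitydns}, the $L^\infty$ upper bound from displacement convexity of $\int\rho^p$ along $W_\alpha$-geodesics (\cite{clss}) and $p\to\infty$, then $w\in L^2$ from $|w|^2/\rho^\alpha\in L^1$, and in $d=1$ a convex functional detecting small values of $\rho$. The only cosmetic differences are that the paper cites Corollary~5.25 of \cite{dolnazsav} for finiteness instead of your explicit $W_2$-geodesic competitor, and for the lower bound it invokes the $d=1$ statement of \cite{clss} that $t\mapsto\int U(\rho_t)$ is convex for \emph{every} convex $U$ and then takes $U(\rho)=\rho^{-s}$ with $s\to\infty$, which is your second suggested option.
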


\begin{proof}
The fact that $W_{\alpha}(\rho_0, \rho_1)$ is finite follows from Corollary 5.25 in \cite{dolnazsav}, the fact that the infimum is achieved then follows  for instance from  proposition \ref{dualitydns}. The estimate \pref{linfiniestim} is obtained by proving the convexity of $\rho\mapsto \int_Q \rho(x)^p dx$ along geodesics with respect to $W_\alpha$ for large $p$, and letting $p$ goes to $+\infty$. This has been done in \cite{clss}, in the case of an open bounded convex set $\Omega$ in $\R^d$, and is a consequence from the fact that the functional defined above generates a $C^0$-metric contraction gradient flow in the space of probability measures on $\Omega$ endowed with $W_\alpha$ (in the sense of \cite{ags}), given by the solution of the following porous medium equation on $\Omega$ with zero flux condition on the boundary
$$
\partial_t\rho_t = \Delta \left(\rho_t^{\alpha+p-1}\right) \mbox{ in } (0,+\infty)\times\Omega,
$$
for all $p$ such that
$$
p\geq 2-\left(1+\frac{1}{d}\right)\alpha \mbox{ and } p\not=1.
$$
Using the translation invariance of the equation, the reader may check that all the arguments of \cite{clss} can be directly adapted to the case of the flat torus.
To conclude the proof  of the first part, let us notice that, since $(\rho, w)$ has finite energy, $\frac{\vert w\vert^2}{\rho^{\alpha}} \in L^1$ so that $w\in L^2$.

When $d=1$, the displacement convexity results of \cite{clss} (again adapted to the case of the flat torus) imply that $t\mapsto \int_Q U(\rho_t(x))dx$ is convex for any convex $U$, we thus have, taking arbitrary negative powers,
\[\max_{t\in[0,1]} \Vert \rho_t^{-1}\Vert_{L^s}^s\leq \max (\Vert \rho_0^{-1} \Vert_{L^{\infty}}, \Vert \rho_1^{-1}\Vert_{L^{\infty}})\]
 for every $s>1$ and the desired claim is obtained by letting $s\to \infty$. 
\end{proof}

\begin{lem}\label{ippineg}
Assume that $\alpha>1-2/d$ and that $\rho_0$, $\rho_1$ belong to $L^{\infty}(\T^d)$.  Let $(\rho, w)\in L^{\infty}\times L^2$ be a weak solution of \pref{cont} and $\phi\in K$ be such that $\nabla \phi\in L^2$ then 
\begin{equation}\label{ipppositif}
\int_{0}^1 \int_Q (\rho \partial_t \phi^{\ac} + w \cdot \nabla \phi)dxdt +\int_Q (\phi(0,.) \rho_0-\phi(1,.) \rho_1)dx\geq 0
\end{equation}
\end{lem}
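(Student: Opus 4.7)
The plan is to regularize $\phi$ by space-time convolution, apply the weak continuity equation \pref{contweak} (which holds with equality for smooth test functions), and pass to the limit $\eps\to 0^+$. The inequality will arise because the singular part of the negative measure $\partial_t\phi$ contributes a nonpositive term when integrated against $\rho\geq 0$, and this nonpositive contribution can be dropped on one side upon taking $\limsup$.

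Concretely, I extend $\phi$ to $\R\times\T^d$ by $\phi(0,\cdot)$ for $t<0$ and $\phi(1,\cdot)$ for $t>1$; by the conventions $\phi(0,\cdot)=\phi(0^+,\cdot)$ and $\phi(1,\cdot)=\phi(1^-,\cdot)$, this extension creates no Dirac mass at the endpoints, and $\partial_t\phi\leq 0$ as a measure on all of $\R\times\T^d$. Let $\eta^\eps$ be an even smooth space-time mollifier and set $\phi^\eps:=\eta^\eps\star\phi\in C^\infty([0,1]\times\T^d)$. Testing \pref{contweak} against $\phi^\eps$ gives the identity
\[
\int_0^1\!\!\int_Q\rho\,\partial_t\phi^\eps + \int_0^1\!\!\int_Q w\cdot\nabla\phi^\eps \;=\; \int_Q\phi^\eps(1,\cdot)\rho_1 - \int_Q\phi^\eps(0,\cdot)\rho_0.
\]
For the second left-hand term, $\nabla\phi^\eps\to\nabla\phi$ strongly in $L^2$ combined with $w\in L^2$ yields convergence to $\int w\cdot\nabla\phi$. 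For the right-hand boundary terms, the monotonicity of $\phi$ in $t$ together with the above conventions give the a.e. convergences $\phi^\eps(0,\cdot)\to\phi(0,\cdot)$ and $\phi^\eps(1,\cdot)\to\phi(1,\cdot)$ on $Q$; combined with $\|\phi^\eps\|_{L^\infty}\leq\|\phi\|_{L^\infty}$ and $\rho_0,\rho_1\in L^\infty$, dominated convergence delivers the desired limits.

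The crucial step is the first left-hand term. I would decompose $\partial_t\phi^\eps=\eta^\eps\star\partial_t\phi^{\ac}+\eta^\eps\star\partial_t\phi^s$. Since $\eta^\eps\star\partial_t\phi^{\ac}\to\partial_t\phi^{\ac}$ in $L^1$ and $\rho\in L^\infty$, one has $\int\rho\,(\eta^\eps\star\partial_t\phi^{\ac})\to\int\rho\,\partial_t\phi^{\ac}$. For the singular part, Fubini (after extending $\rho$ by zero in $t$) gives
\[
\int_0^1\!\!\int_Q \rho\,(\eta^\eps\star\partial_t\phi^s)\,dx\,dt = \int(\tilde\eta^\eps\star\rho)\,d(\partial_t\phi^s)\leq 0,
\]
where $\tilde\eta^\eps(t,x):=\eta^\eps(-t,-x)$: the convolution $\tilde\eta^\eps\star\rho$ is nonnegative, while $\partial_t\phi^s\leq 0$ as the singular part of a nonpositive measure. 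Hence $\limsup_\eps\int\rho\,\partial_t\phi^\eps\leq\int\rho\,\partial_t\phi^{\ac}$. Plugging these limits back into the identity gives
\[
\int_Q\phi(1,\cdot)\rho_1 - \int_Q\phi(0,\cdot)\rho_0 - \int_0^1\!\!\int_Q w\cdot\nabla\phi \;\leq\; \int_0^1\!\!\int_Q\rho\,\partial_t\phi^{\ac},
\]
which rearranges to \pref{ipppositif}. The main obstacle is the Fubini/sign argument for the singular part: it is conceptually clean but requires careful bookkeeping of the convolution transfer and the zero-extension of $\rho$. The a.e. convergence of the mollified boundary traces also warrants some care, but follows in a standard way from the BV regularity and monotonicity of $\phi$ in time.
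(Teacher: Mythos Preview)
Your argument is correct and follows essentially the same route as the paper: mollify $\phi$, test the continuity equation with $\phi^\eps$, exploit that the singular part of $\partial_t\phi$ is nonpositive so that integrating it against the nonnegative (mollified) density only produces an inequality in the right direction, and pass to the limit using $\rho\in L^\infty$, $w,\nabla\phi\in L^2$. The only organizational difference is that the paper transfers the mollifier to $\rho$ (writing $\int\rho\,\partial_t\phi^\eps=\int\rho^\eps\,d(\partial_t\phi)\leq\int\rho^\eps\,\partial_t\phi^{\ac}$) rather than decomposing $\partial_t\phi^\eps$ as you do, and for the boundary terms it invokes the one-sided inequalities \pref{convolin1}--\pref{convolin2} rather than a.e.\ convergence of the traces; the latter point is where your sketch is loosest, but since only the inequality is needed (and since $\rho_0,\rho_1\in L^\infty$ gives $L^1$-convergence of the traces anyway), this is harmless.
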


\begin{proof}
We regularize $\phi$ exactly as in the proof of proposition \ref{dualitydns} and thus set $\phi^\eps:=\eta_\eps \star \phi$, since $\partial_t \phi^{\ac} \geq \partial_t \phi$, thanks to \pref{cont} , defining $\rho^\eps:=\eta^\eps \star \rho$  (having extended $\rho$ by $0$ outside $[0,1]$ which is consistent with the fact that we have extended $\partial_t \phi$ in the same way), thanks to the periodicity and $\partial_t \phi \leq \partial_t \phi^{\ac}$,  we then have
\[\begin{split}
0&=\int_{0}^1 \int_Q (\rho \partial_t \phi^{\eps} + w \cdot \nabla \phi^\eps) +\int_Q (\phi^\eps(0,.) \rho_0-\phi^\eps(1,.) \rho_1)\\
&=\int_{0}^1 \int_Q (\rho^\eps \partial_t \phi + w \cdot \nabla \phi^\eps) +\int_Q (\phi^\eps(0,.) \rho_0-\phi^\eps(1,.) \rho_1)\\
&\leq \int_{0}^1 \int_Q (\rho^\eps \partial_t \phi^{\ac} + w \cdot \nabla \phi^\eps) +\int_Q (\phi^\eps(0,.) \rho_0-\phi^\eps(1,.) \rho_1)
\end{split}\]
Since $w$ and $\nabla \phi$ are in $L^2$, $\int_{0}^1 \int_Q w \cdot \nabla \phi^\eps$ converges to $\int_{0}^1 \int_Q w \cdot \nabla \phi$. Moreover, since $\rho^\eps$ is uniformly bounded and converges to $\rho$ in $L^1$---hence a.e. up to a subsequence---, Lebesgue's  dominated convergence theorem implies that $\int_{0}^1 \int_Q \rho^\eps \partial_t \phi^{\ac}$ converges to $\int_{0}^1 \int_Q \rho \partial_t \phi^{\ac}$. Recalling \pref{convolin1}, \pref{convolin2}, we therefore obtain \pref{ipppositif} by letting $\eps\to 0^+$.

\end{proof}

\begin{thm}\label{theo:opticond}
Assume that $\alpha>1-2/d$ and that $\rho_0$, $\rho_1$ belong to $L^{\infty}(\T^d)$.  Let $(\rho, w)\in L^{\infty}\times L^2$ be a weak solution of \pref{cont}. Then $(\rho, w)$ solves \pref{dns} if and only if there exists $\phi \in \BV\cap L^{\infty}$ such that $\partial_t \phi \leq 0$, $\nabla \phi \in  L^2$ and
\[\left\{\begin{array}{llll}
& w=\frac{1}{2} \rho^\alpha \nabla \phi \mbox{ so that } \partial_t \rho +\dive\Big(\frac{1}{2} \rho^\alpha \nabla \phi\Big)=0,\\
&\rho>0 \Rightarrow \partial_t \phi^{\ac} +\frac{\alpha}{4} \rho^{\alpha-1} \vert \nabla \phi \vert^2=0, \\
&\rho=0 \Rightarrow w=\nabla \phi=0, \\
& \int_0^1 \int_Q \rho \partial_t \phi^{\ac}+w \cdot \nabla \phi =\int_Q \phi(1,.) \rho_1-\int_Q \phi(0,.) \rho_0.
\end{array}\right.\]
\end{thm}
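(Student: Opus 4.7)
The strategy is to combine Fenchel--Young inequality with the no--duality--gap identity of Proposition~\ref{dualitydns} and the integration--by--parts inequality of Lemma~\ref{ippineg}. The key point is that since $L = H^*$, one has the pointwise inequality
\[
H(\rho,w) + L(a,b) \geq a\rho + b\cdot w,
\]
with equality if and only if $(a,b) \in \partial H(\rho,w)$. The explicit formula for $\partial H$ recorded after the definition of $H$ will translate this equality case into the three pointwise PDE--type conditions of the theorem.

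\smallskip

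\emph{Proof of the ``only if'' direction.} Assume $(\rho,w)$ is optimal in \pref{dns} and let $\phi$ be the minimizer of $J^{\rel}$ furnished by Theorem~\ref{existdualdns}, so $\phi\in K\cap L^\infty$ with $\nabla\phi\in L^2$. Since $\rho\in L^\infty$, we have $\rho^{\ac}=\rho$, and Proposition~\ref{dualitydns} gives
\[
\int_0^1\int_Q H(\rho,w) + \int_0^1\int_Q L(\partial_t\phi^{\ac},\nabla\phi) + \int_Q\phi(0,\cdot)\rho_0 - \int_Q\phi(1,\cdot)\rho_1 = 0.
\]
On the other hand, Lemma~\ref{ippineg} applies to the pair $(\rho,w)$ and $\phi$, yielding
\[
\int_0^1\int_Q\bigl(\rho\,\partial_t\phi^{\ac} + w\cdot\nabla\phi\bigr) + \int_Q\phi(0,\cdot)\rho_0 - \int_Q\phi(1,\cdot)\rho_1 \geq 0.
\]
Subtracting the first from the second, the integrand $\rho\,\partial_t\phi^{\ac} + w\cdot\nabla\phi - H(\rho,w) - L(\partial_t\phi^{\ac},\nabla\phi)$ has nonnegative integral while being $\leq 0$ pointwise a.e. by Fenchel--Young. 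Hence it vanishes a.e., i.e.\ $(\partial_t\phi^{\ac},\nabla\phi)\in\partial H(\rho,w)$ a.e. On $\{\rho>0\}$ the explicit form of $\partial H$ gives $\nabla\phi = 2w/\rho^\alpha$ and $\partial_t\phi^{\ac} = -\alpha |w|^2/\rho^{\alpha+1}$; substituting the first into the second yields the second PDE. On $\{\rho=0\}$ the finiteness of $H(\rho,w)$ forces $w=0$, and $\partial H(0,0) = \R_-\times\{0\}$ forces $\nabla\phi=0$. The integral saturation identity then follows by integrating the pointwise Fenchel--Young equality and re-using the no--duality--gap identity.

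\smallskip

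\emph{Proof of the ``if'' direction.} Conversely, suppose $\phi$ satisfies the four listed conditions. The first three pointwise conditions are precisely $(\partial_t\phi^{\ac},\nabla\phi)\in\partial H(\rho,w)$ a.e., so Fenchel--Young holds with equality pointwise. Integrating and using the saturation identity gives
\[
\int_0^1\int_Q H(\rho,w) + \int_0^1\int_Q L(\partial_t\phi^{\ac},\nabla\phi) = \int_Q\phi(1,\cdot)\rho_1 - \int_Q\phi(0,\cdot)\rho_0,
\]
hence $\int_0^1\int_Q H(\rho,w) = -J^{\rel}(\phi) \leq -\inf J^{\rel} = W_\alpha(\rho_0,\rho_1)^2$ by Proposition~\ref{dualitydns}. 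Since $(\rho,w)$ is admissible, the reverse inequality holds by definition, so $(\rho,w)$ is a minimizer.

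\smallskip

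The main technical point requiring care is the degenerate case $\rho=0$, where the subdifferential of $H$ is a half--line, so that the Fenchel--Young equality must be unpacked carefully to extract both $w=0$ and $\nabla\phi=0$. A secondary point is the integrability of the products $\rho\,\partial_t\phi^{\ac}$ and $w\cdot\nabla\phi$, guaranteed respectively by $\rho\in L^\infty$ together with $\partial_t\phi^{\ac}\in L^1$ (since $\phi\in\BV$ in time), and by $w,\nabla\phi\in L^2$, so that Lemma~\ref{ippineg} is indeed applicable.
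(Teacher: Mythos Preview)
Your proof is correct and, for the ``only if'' direction, essentially identical to the paper's: both combine the no-duality-gap identity of Proposition~\ref{dualitydns} with the integration-by-parts inequality of Lemma~\ref{ippineg} to force equality in Fenchel--Young, then read off the pointwise conditions from the explicit form of $\partial H$.

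For the ``if'' direction you take a slightly different route. The paper fixes an arbitrary competitor $(\mu,v)\in L^\infty\times L^2$ and uses the subdifferential inequality $H(\mu,v)-H(\rho,w)\geq (\mu-\rho)\partial_t\phi^{\ac}+(v-w)\cdot\nabla\phi$ together with Lemma~\ref{ippineg} applied to $(\mu,v)$ to deduce $\int H(\mu,v)\geq \int H(\rho,w)$ directly. You instead observe that the pointwise conditions force the Fenchel--Young equality $H(\rho,w)+L(\partial_t\phi^{\ac},\nabla\phi)=\rho\,\partial_t\phi^{\ac}+w\cdot\nabla\phi$, integrate, plug in the saturation identity, and obtain $\int H(\rho,w)=-J^{\rel}(\phi)\leq -\inf J^{\rel}=W_\alpha(\rho_0,\rho_1)^2$. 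Your route is arguably cleaner: it avoids invoking Lemma~\ref{ippineg} a second time for a generic competitor and sidesteps the (harmless but slightly awkward) reduction in the paper to competitors $(\mu,v)\in L^\infty\times L^2$. The paper's route, on the other hand, is more self-contained in that it does not re-appeal to the duality result of Proposition~\ref{dualitydns}. Both are standard convex-duality arguments and entirely equivalent in strength.
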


\begin{rem} $\;$\\
1) Let us notice that the optimality system in Theorem~\ref{theo:opticond} admits a unique solution $(\rho,\phi)$ (with the property $\partial_t \phi \leq 0$, and up to a constant for $\phi$). Indeed, we have uniqueness in the dual problem~\eqref{primalr}, up to a constant, and the distance problem~\eqref{dns} admits itself an unique solution $(\rho,w)$. \\

\noindent 2) Corollary 5.18 of \cite{dolnazsav} states that the distance problem~\eqref{dns}  provides a (unique) constant speed geodesics. This last property entails that
$$
t\mapsto \int_Q\frac{|w|^2}{\rho^\alpha}dx
$$
is constant on $[0,1]$, and then, we can slightly improve the regularity properties of the potential $\phi$ by
\begin{equation*}
\left\{
\begin{array}{l}
\ds \nabla\phi \in L^{\infty}([0,1],L^2(\T^d))\\
\ds \partial_t\phi^{{\rm ac}} \in L^{\infty}([0,1],L^1(\T^d)).
\end{array}
\right.
\end{equation*}

\noindent 3) If $(\rho,w)\in L^{\infty}\times L^2$ solves \pref{dns} and if, in addition,  for every $t$, $\rho_t\geq C>0$ a.e. on $\T^d$ then the optimality conditions of Theorem \ref{theo:opticond} can be improved. In this case, $\partial_t \phi$ has no singular part. Indeed, proceeding as in the proof of lemma \ref{ippineg} we obtain
\[0\leq -C \Vert \partial_t \phi^s\Vert_{\M((0,1)\times \T^d)}+ \int_0^1 \int_Q \rho \partial_t \phi^{\ac}+w \cdot \nabla \phi +\int_Q \phi(0,.) \rho_0-\int_Q \phi(1,.) \rho_1\]
which together with the last condition of Theorem \ref{theo:opticond} gives the desired claim. In this case, we therefore have  $\phi\in W^{1,1}((0,1)\times \T^d)$ and since $\rho$ is bounded away from $0$, $\phi$ satisfies the Hamilton-Jacobi equation
 \[\partial_t \phi +\frac{\alpha}{4} \rho^{\alpha-1} \vert \nabla \phi \vert^2=0\]
 almost everywhere.  According to Theorem~\ref{carrillo}, in dimension $1$, if $\rho_0$, $\rho_1$ are bounded away from $0$ then so is $\rho_t$ (uniformly in $t$) so that the previous properties hold. Unfortunately, we do not know whether  the same holds in higher dimension.
\end{rem}

\begin{proof}[Proof of Theorem~\ref{theo:opticond}]
Let $(\rho, w)$ solve \pref{dns} (so that  $(\rho, w)\in L^{\infty}\times L^2$  by Theorem \ref{carrillo}). We deduce from Theorem \ref{existdualdns} that there is a unique (up to a constant)  solution $\phi\in K$ of \pref{primalr} and it satisfies $\nabla \phi\in L^2$. By the duality relation of proposition \ref{dualitydns}, the fact that $L$ and $H$ are convex conjugates and Young's inequality, we then have
\[\begin{split}
0&=\int_0^1\int_Q H(\rho, w)+L(\partial_t \phi^{\ac}, \nabla \phi)+\int_Q (\phi(0,.) \rho_0-\phi(1,.) \rho_1)dx\\
&\geq  \int_{0}^1 \int_Q (\rho \partial_t \phi^{\ac} + w \cdot \nabla \phi)dxdt +\int_Q (\phi(0,.) \rho_0-\phi(1,.) \rho_1)dx.
\end{split}\]
With \pref{ipppositif}, we then have
\[\int_0^1 \int_Q \rho \partial_t \phi^{\ac}+w \cdot \nabla \phi =\int_Q \phi(1,.) \rho_1-\int_Q \phi(0,.) \rho_0\]
so that we should also have a.e. an equality in Young's inequality above. This means that $(\partial_t \phi^{\ac}(t,x), \nabla \phi(t,x))\in \partial H(\rho(t,x), w(t,x))$ for a.e. $(t,x)\in (0,1)\times \T^d$. Therefore  $w=\frac{1}{2} \rho^\alpha \nabla \phi$ a.e., for a.e. $(t,x)$ for which $\rho(t,x)=0$ one has  $\nabla \phi(t,x)=w(t,x)=0$ and for a.e. $(t,x)$ for which $\rho(t,x)>0$ one has 
\[\partial_t \phi^{\ac}(t,x)=-\alpha\frac{\vert w(t,x)\vert^2}{\rho^{\alpha+1}(t,x)}= -\frac{\alpha}{4} \rho^{\alpha-1} \vert \nabla \phi(t,x) \vert^2.\]
This proves the necessity claim. To prove sufficiency, assume that $(\rho, w)$ and $\phi$ satisfy the claim of the theorem. Let $(\mu, v)$ solve the continuity equation \pref{cont} with $(\mu,  v)\in L^{\infty}\times L^2$ (which is without loss of generality in view of theorem \ref{carrillo}) and $\int_0^1 \int_Q H(\mu(t,x), v(t,x))dx dt<+\infty$. Since, as previously, $(\partial_t \phi^{\ac}(t,x), \nabla \phi(t,x))\in \partial H(\rho(t,x), w(t,x))$ for a.e. $(t,x)\in (0,1)\times \T^d$, we have 
\[\begin{split}
&\int_0^1 \int_Q H(\mu(t,x), v(t,x))dx dt- \int_0^1 \int_Q H(\rho(t,x), w(t,x))dx dt\\
&\geq \int_0^1 \int_Q [(\mu-\rho)\partial_t \phi^{\ac}+(v-w)\cdot \nabla \phi]dx dt\\
&=\int_0^1 \int_Q (\mu \partial_t \phi^{\ac}+v\cdot \nabla \phi)+\int_Q (\phi(0,x) \rho_0(x)- \phi(1,x) \rho_1(x))dx
\end{split}.\]
By Lemma \ref{ippineg}, we have
\[\int_{0}^1 \int_Q (\mu \partial_t \phi^{\ac} +v \cdot \nabla \phi)dxdt\geq \int_Q (\phi(1,.) \rho_1-\phi(0,.) \rho_0)dx\]
which finally enables us to conclude that $(\rho, w)$ solves \pref{dns}. 
\end{proof}

\section{A problem on measures on curves}\label{sec:PbDualCourbe}

Let $\Gamma$ be the set of continuous curves $\gamma:[0,1]\to \R^d$ endowed with the topology of the uniform convergence and $W$ be the subset of such curves which are absolutely continuous and have a derivative in $L^{2/(2-\alpha)}([0,1])$. For $t\in [0,1]$, we denote by $e_t$ the evaluation map at time $t$: $e_t(\gamma)=\gamma(t)$. We denote by $\Pi$ the set of Borel probability measures $\eta$ on $\Gamma$ which are $\Z^d$ periodic in space: if $\tau_k$ is the translation in $\R^d$ with vector $k\in \Z^d$ and $\tau_k(\gamma)(s)=\gamma(s)+k$, then $\tau_k\sharp \eta=\eta$. Let $\Pi_2$ be the subset of such measures with
$$
\int_{\Gamma}\int_0^1 |\dot \gamma(t)|^{\frac{2}{2-\alpha}}  dt d\eta(\gamma)<+\infty\;.
$$
To any measure $\eta\in \Pi_2$ we associate the measure $\sigma_\eta$ on $[0,1]\times Q$ defined by the equality
$$
\int_0^1 \int_{Q}f(s,x)d\sigma_\eta(s,x)= \int_\Gamma\int_0^1 f(s,\gamma(s))\left|\dot \gamma(s)\right|^{\frac{2}{2-\alpha}} dsd\eta(\gamma)\;,
$$
for any continuous and periodic in space map $f$. We denote by $\Pi_{2,ac}$ the set of measures $\eta\in \Pi_2$ such that the measure $\sigma_\eta$ is absolutely continuous with respect to the Lebesgue measure. In this case we identify $\sigma_\eta$ with its density. Finally, given two probability densities $\rho_0, \rho_1$, we denote by $\Pi_{2,ac}(\rho_0,\rho_1)$ the set of $\eta\in \Pi_{2,ac}$ such that $e_0\sharp \eta=\rho_0$ and   $e_1\sharp \eta=\rho_0$. We set
$$
K(\eta)= \left\{\begin{array}{ll}
\ds \int_0^1\int_{Q} \left(\sigma_\eta(s,x)\right)^{2-\alpha}
 dx ds & {\rm if }\; \eta\in \Pi_{2,ac}(\rho_0,\rho_1)\\
+\infty & {\rm otherwise}
\end{array}\right. 
$$
Let us note for later use that:
\begin{lem}\label{lem:Ksci}
The map $K$ is lower semicontinuous on $\Pi$. 
\end{lem}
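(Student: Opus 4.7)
The plan is, given a sequence $\eta_n\to\eta$ in $\Pi$ (narrow topology of probability measures on $\Gamma$) with $\liminf_n K(\eta_n)<+\infty$, to extract a subsequence along which $K(\eta_n)\leq C$ and then to establish $K(\eta)\leq \liminf_n K(\eta_n)$. Since $K(\eta_n)=\|\sigma_{\eta_n}\|_{L^{2-\alpha}([0,1]\times Q)}^{2-\alpha}$, the densities $\sigma_{\eta_n}$ are bounded in $L^{2-\alpha}$, and up to a further extraction I can assume $\sigma_{\eta_n}\rightharpoonup \sigma$ weakly in $L^{2-\alpha}$ for some nonnegative $\sigma$.

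The heart of the argument will be to identify $\sigma_\eta$ as an absolutely continuous measure with density bounded above by $\sigma$ a.e. For this I plan to invoke the classical lower semicontinuity, on $\Gamma$ endowed with the uniform topology, of the integral functional
\[\Phi_f(\gamma)\,:=\,\int_0^1 f(s,\gamma(s))\,\bigl|\dot\gamma(s)\bigr|^{\frac{2}{2-\alpha}}\,ds\]
(with the convention that $\Phi_f(\gamma)=+\infty$ when $\gamma\notin W^{1,2/(2-\alpha)}$), for any nonnegative continuous $f:[0,1]\times \T^d\to \R$. This lsc holds because the Lagrangian $(s,x,v)\mapsto f(s,x)|v|^{2/(2-\alpha)}$ is nonnegative, continuous in $(s,x)$, convex and superlinear in $v$ (recall $2/(2-\alpha)>1$). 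Combining it with narrow convergence of $\eta_n$ yields $\int_\Gamma \Phi_f\,d\eta\leq \liminf_n \int_\Gamma \Phi_f\,d\eta_n$. Rewriting both sides via the definition of $\sigma_\eta$ and $\sigma_{\eta_n}$, and using weak $L^{2-\alpha}$ convergence to pass $\int f\,d\sigma_{\eta_n}\to \int f\sigma$, I obtain $\int f\,d\sigma_\eta \leq \int f\,\sigma$ for every such $f$, whence $d\sigma_\eta\leq \sigma\,dsdx$ as measures.

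From here the conclusion should be straightforward. The marginal constraints $e_t\sharp \eta=\rho_t$ at $t=0,1$ pass to the narrow limit by continuity of the evaluation maps, and $\eta\in\Pi_2$ follows since $\sigma_\eta$ has finite total mass; so $\eta\in\Pi_{2,ac}(\rho_0,\rho_1)$. Combining the pointwise bound $0\leq \sigma_\eta\leq \sigma$ a.e., the monotonicity of $t\mapsto t^{2-\alpha}$ on $[0,+\infty)$, and weak $L^{2-\alpha}$ lsc of the convex functional $\tau\mapsto \int \tau^{2-\alpha}$ then gives
\[K(\eta)\,=\,\int_0^1\!\!\int_Q \sigma_\eta^{2-\alpha}\,\leq\,\int_0^1\!\!\int_Q \sigma^{2-\alpha}\,\leq\,\liminf_n \int_0^1\!\!\int_Q \sigma_{\eta_n}^{2-\alpha}\,=\,\liminf_n K(\eta_n).\]
The main technical obstacle I expect is precisely the lsc of $\Phi_f$ with its $+\infty$ extension on non-Sobolev curves: this is standard for convex superlinear Tonelli Lagrangians, but must be invoked with some care to ensure that narrow convergence on $\Gamma$ (uniform topology) does yield the desired inequality for this unbounded, extended-valued integrand.
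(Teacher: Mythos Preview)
Your proposal is correct and follows essentially the same route as the paper's proof: extract a weak $L^{2-\alpha}$ limit $\sigma$ of the $\sigma_{\eta_n}$, use lower semicontinuity of the action functional $\Phi_f$ on $\Gamma$ (for nonnegative continuous periodic $f$) together with narrow convergence to obtain $\sigma_\eta\leq\sigma$, and conclude via monotonicity of $t\mapsto t^{2-\alpha}$ and weak lower semicontinuity of the $L^{2-\alpha}$ norm. You even add the verification that the marginal constraints $e_i\sharp\eta=\rho_i$ pass to the limit, a point the paper's proof leaves implicit.
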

\begin{proof} Let $\eta_n$ weakly converge to $\eta$. Without loss of generality we can assume that $M:=\limsup_{n\to+\infty} K(\eta_n)<+\infty$. Since $(\sigma_{\eta_n})$ is bounded in $L^{2-\alpha}$ we can assume without loss of generality that $(\sigma_{\eta_n})$ converges weakly in $L^{2-\alpha}$ to some $\sigma \in L^{2-\alpha}$. For any continuous periodic and positive map $f$ the map 
$$
\gamma \to \left\{\begin{array}{ll}
\ds  \int_0^1 f(s,\gamma(s))\left|\dot \gamma(s)\right|^{\frac{2}{2-\alpha}} ds &{\rm if } \gamma \in W\\
+\infty & {\rm otherwise}
\end{array}\right.
$$
is lower semicontinuous in $\Gamma$. Hence 
$$
\begin{array}{rl}
\ds \int_0^1 \int_Q f(s,x) d\sigma_\eta(s,x)\; = & \ds  \int_\Gamma\int_0^1 f(s,\gamma(s))\left|\dot \gamma(s)\right|^{\frac{2}{2-\alpha}}\ dsd\eta(\gamma)\\
\leq  & \ds \liminf_{n\to+\infty}  \int_\Gamma\int_0^1 f(s,\gamma(s))\left|\dot \gamma(s)\right|^{\frac{2}{2-\alpha}}\ dsd\eta_n(\gamma)\\
= & \ds \liminf_{n\to+\infty}   \int_0^1 \int_Q f(s,x) \sigma_{\eta_n}(s,x)\\
= & \ds  \int_0^1 \int_Q f(s,x) \sigma (s,x) 
\end{array}
$$
Therefore, $\sigma_\eta$ is absolutely continuous, with $\sigma\geq \sigma_\eta$ and 
$$
K(\eta) =  \int_0^1 \int_Q \sigma_\eta^{2-\alpha}\; \leq \;  
 \int_0^1 \int_Q \sigma^{2-\alpha} \; \leq \; \liminf_{n\to +\infty} K(\eta_n)\;.
$$
\end{proof}

\begin{thm}\label{prop:equicurve} Let $\rho_0$ and $\rho_1$ be two Borel probability measures on $\T^d$. Then 
\begin{equation}\label{pb:gamma}
\inf_{\eta\in \Pi_{2,ac}(\rho_0,\rho_1)} K(\eta) = 
W_\alpha(\rho_0, \rho_1)^2.
\end{equation}
In addition, this problem is attained by some $\overline\eta\in\Pi_{2,ac}(\rho_0,\rho_1)$.
\end{thm}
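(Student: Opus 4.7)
We may reduce to the case $W_\alpha(\rho_0,\rho_1)<+\infty$, since Step 1 below shows $W_\alpha^2\le K(\eta)$ for every $\eta\in\Pi_{2,ac}(\rho_0,\rho_1)$, so if $W_\alpha^2=+\infty$ both sides of \pref{pb:gamma} equal $+\infty$ and the attainment claim is vacuous. The plan is to prove the two inequalities separately, and to produce the minimizer explicitly by lifting an optimal pair of \pref{dns} to a measure on curves.

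\smallskip\noindent\emph{Step 1: $W_\alpha^2\le K(\eta)$.} Given $\eta\in\Pi_{2,ac}(\rho_0,\rho_1)$ with $K(\eta)<+\infty$, associate to it the vector measure $w_\eta$ on $[0,1]\times Q$ defined by
$$
\int\psi\cdot dw_\eta \,\ddd\,\int_\Gamma\int_0^1 \psi(s,\gamma(s))\cdot\dot\gamma(s)\,ds\,d\eta(\gamma)
$$
for every continuous periodic vector field $\psi$, together with the space-time measure $\rho_\eta$ whose slice at time $s$ is $e_s\sharp\eta$. Then $(\rho_\eta,w_\eta)$ is a weak solution of \pref{cont}. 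A H\"older inequality on $\Gamma\times[0,1]$ equipped with $\eta\otimes ds$, with exponents $\tfrac{2}{2-\alpha}$ on $|\dot\gamma|$ and $\tfrac{2}{\alpha}$ on the indicator of $A$, yields
$$
|w_\eta|(A)^2\,\le\,\sigma_\eta(A)^{2-\alpha}\,\rho_\eta(A)^{\alpha}
$$
for every Borel $A$; combined with $\sigma_\eta\ll{\mathcal L}$ this shows $w_\eta\ll{\mathcal L}$ and forces both $w_\eta$ and $\sigma_\eta$ to be carried by $\rho_\eta^{ac}$. Disintegrating $\eta\otimes ds$ under $(\gamma,s)\mapsto(s,\gamma(s))$ produces probability measures $\eta_{s,x}$ concentrated on $\{\gamma:\gamma(s)=x\}$ such that, setting $v(s,x)\ddd\int\dot\gamma(s)\,d\eta_{s,x}$ and $u(s,x)\ddd\int|\dot\gamma(s)|^{2/(2-\alpha)}\,d\eta_{s,x}$, the Lebesgue densities satisfy $w_\eta=v\rho_\eta^{ac}$ and $\sigma_\eta=u\rho_\eta^{ac}$. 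Jensen's inequality applied to $\eta_{s,x}$ gives $|v|^{2/(2-\alpha)}\le u$, whence pointwise
$$
H(\rho_\eta^{ac},w_\eta)=|v|^2(\rho_\eta^{ac})^{2-\alpha}\,\le\,u^{2-\alpha}(\rho_\eta^{ac})^{2-\alpha}=\sigma_\eta^{2-\alpha}.
$$
Integrating and using the definition of $W_\alpha$ gives $W_\alpha^2\le K(\eta)$.

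\smallskip\noindent\emph{Step 2: $K(\bar\eta)\le W_\alpha^2$ for some $\bar\eta\in\Pi_{2,ac}(\rho_0,\rho_1)$.} By Proposition \ref{dualitydns} the infimum in \pref{dns} is attained by some $(\rho,w)$; writing $d\rho=\rho_s\otimes ds$ and $v\ddd w/\rho^{ac}$ on $\{\rho^{ac}>0\}$ (zero elsewhere), the finiteness of $\int H$ forces $w=0$ on the singular part of $\rho$, so $w=\rho^{ac}v$ and $H(\rho^{ac},w)=|v|^2(\rho^{ac})^{2-\alpha}$. A H\"older estimate exploiting that each $\rho_s$ is a probability measure yields $\int_0^1\int|v|^{2/(2-\alpha)}d\rho_s\,ds<+\infty$, the precise integrability required to invoke the superposition principle (Ambrosio--Smirnov in Lisini's form, adapted to the periodic framework by lifting to the universal cover $\R^d\to\T^d$). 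This produces $\bar\eta\in\Pi$ concentrated on $W^{1,2/(2-\alpha)}$ curves with $e_s\sharp\bar\eta=\rho_s$ for every $s$ and $\dot\gamma(s)=v(s,\gamma(s))$ for $(\bar\eta\otimes ds)$-a.e.\ $(\gamma,s)$. Substituting this identity in the definition of $\sigma_{\bar\eta}$ gives $d\sigma_{\bar\eta}(s,x)=|v(s,x)|^{2/(2-\alpha)}d\rho_s(x)\,ds$, which is absolutely continuous with Lebesgue density $|v|^{2/(2-\alpha)}\rho^{ac}$; hence $\bar\eta\in\Pi_{2,ac}(\rho_0,\rho_1)$ and
$$
K(\bar\eta)=\int(|v|^{2/(2-\alpha)}\rho^{ac})^{2-\alpha}=\int|v|^2(\rho^{ac})^{2-\alpha}=\int H(\rho^{ac},w)=W_\alpha(\rho_0,\rho_1)^2.
$$
Together with Step 1, this proves \pref{pb:gamma} and exhibits $\bar\eta$ as a minimizer.

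\smallskip\noindent\emph{Main obstacle.} The technical heart is Step 2: rigorously invoking the superposition principle in the periodic setting under the weak regularity available, where $\rho_s$ may have a singular part and $v$ is only $L^{2/(2-\alpha)}$ against $\rho^{ac}$, not against Lebesgue. Periodicity of $\bar\eta$ is handled by exploiting the $\Z^d$-invariance of both the continuity equation and the ODE $\dot\gamma=v(t,\gamma)$, so that $\Z^d$-translates of admissible curves remain admissible. A secondary subtlety in Step 1 is showing that the singular part of $\rho_\eta$ carries no mass for either $w_\eta$ or $\sigma_\eta$, which is what allows the pointwise Jensen-based inequality $H(\rho_\eta^{ac},w_\eta)\le\sigma_\eta^{2-\alpha}$ to hold globally; this is forced by the H\"older bound above combined with $\sigma_\eta\ll{\mathcal L}$. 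As an alternative to the explicit lift, existence of a minimizer can also be recovered from a direct method based on Lemma \ref{lem:Ksci}, using that a bound on $K(\eta_n)$ controls $\int|\dot\gamma|^{2/(2-\alpha)}d\eta_n$ and hence yields tightness via Arzel\`a--Ascoli on the space of curves modulo $\Z^d$.
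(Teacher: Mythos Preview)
Your plan is correct and takes a genuinely different route from the paper in both halves.

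For the inequality $W_\alpha^2\le K(\eta)$, the paper argues through the dual potential: using $W_\alpha^2=-\inf_\phi J(\phi)$ (Proposition \ref{dualitydns}), it shows for each smooth $\phi$ with $\partial_t\phi<0$ that $t\mapsto\phi(t,\gamma(t))-d_\alpha\int_0^t A^{\alpha/(2-\alpha)}|\dot\gamma|^{2/(2-\alpha)}$ is nonincreasing along any curve, integrates against $\eta$, and then optimizes in $\phi$ using Young's inequality. Your Step~1 instead manufactures an admissible pair $(\rho_\eta,w_\eta)$ for \pref{dns} directly from $\eta$ and compares the energies pointwise via Jensen. This is more self-contained and never touches the potential.

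For the reverse inequality and existence, the paper regularizes the optimal $(\rho,w)$ by the heat kernel so that $\rho_\eps$ is smooth and strictly positive, builds $\eta_\eps$ as the push-forward of $\rho_{\eps,0}$ under the classical flow of $w_\eps/\rho_\eps$, checks $K(\eta_\eps)=\int H(\rho_\eps,w_\eps)\to W_\alpha^2$, and then extracts a minimizer via tightness (Lemma \ref{lem:etan}) and lower semicontinuity (Lemma \ref{lem:Ksci}). Your Step~2 bypasses regularization and compactness altogether by invoking the superposition principle on the non-smooth optimal pair, producing $\bar\eta$ with $K(\bar\eta)=W_\alpha^2$ exactly.

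Trade-offs: your argument is shorter and yields the minimizer explicitly, but it imports a substantial black box (the superposition principle in the $L^{2/(2-\alpha)}$ setting, together with the periodic adaptation and the care needed when $\rho$ has a singular part, which you correctly flag). The paper's approach is more elementary, relying only on classical ODE flows and the two preparatory lemmas; moreover the approximating family $(\eta_\eps)$ built there is reused in the proof of Proposition \ref{prop:Liens} to establish the relations between $\eta$, $(\rho,w)$ and $\phi$.
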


\begin{proof}  In  order to show that $\inf_{\eta\in \Pi_{2,ac}(\rho_0,\rho_1)} K(\eta) \geq 
W_\alpha(\rho_0, \rho_1)^2$, we recall that, from proposition \ref{dualitydns}
$$
W_\alpha(\rho_0, \rho_1)^2= -\inf_{\phi\in C^1([0,1]\times \T^d)} J(\phi) \;.
$$
Let  $\gamma\in W$ and $\phi$ be a ${\mathcal C}^1$ periodic map such that $\partial_t\phi<0$. We set
$$
A(t,x)= \frac{|\nabla \phi(t,x)|^{\frac{2}{\alpha}}}{(-\partial_t \phi(t,x))}
$$
and 
\begin{equation}\label{def:dalpha}
d_\alpha= \frac{(2-\alpha)\alpha^{\frac{\alpha}{2-\alpha}}}{2^{\frac{2}{2-\alpha}}}\;.
\end{equation}
Following  the same  computations as in the proof of Proposition \ref{prop:constrphin}, we have
$$
\frac{d}{dt} \left(\phi(t,\gamma(t)) -d_\alpha\int_0^t (A(s,\gamma(s)))^{\frac{\alpha}{2-\alpha}}\left|\dot \gamma(s)\right|^{\frac{2}{2-\alpha}} ds\right) \leq 0
$$
Therefore 
\begin{equation}\label{phi1phi0gamma}
\phi(1,\gamma(1))\leq \phi(0,\gamma(0))+d_\alpha\int_0^1 (A(s,\gamma(s))^{\frac{\alpha}{2-\alpha}}\left|\dot \gamma(s)\right|^{\frac{2}{2-\alpha}}ds\;.
\end{equation}
Let now $\eta\in \Pi_{2,ac}(\rho_0, \rho_1)$. Integrating (\ref{phi1phi0gamma}) with respect to $\eta$ then gives:
$$
\begin{array}{l}
\ds{ \int_{Q} \phi(1,x)d\rho_1(x)-\int_Q \phi(0,x)d\rho_0(x)    } \\
\qquad \qquad \qquad  \leq \;  
\ds{   d_\alpha \int_{\Gamma} \int_0^1 (A(s,\gamma(s))^{\frac{\alpha}{2-\alpha}}\left|\dot \gamma(s)\right|^{\frac{2}{2-\alpha}}\ dsd\eta(\gamma)  }\\
\qquad \qquad \qquad = \;  \ds{ d_\alpha \int_{Q} \int_0^1 (A(s,y))^{\frac{\alpha}{2-\alpha}}\sigma_\eta(s,y)\ dsdy  }
\end{array} 
$$
Therefore
$$
\begin{array}{l}
\ds{ \kappa_\alpha  \int_0^1\int_{Q} \left(\frac{|\nabla  \phi|^{\frac{2}{\alpha}}}{(-\partial_t \phi)}\right)^{\frac{\alpha}{1-\alpha}} dx dt 
+\int_Q \phi(0)d\rho_0 -\int_Q \phi(1)d\rho_1  }\\
\qquad \qquad \geq 
\ds{   \int_0^1\int_{Q}  \left(\kappa_\alpha \left(A(s,y)\right)^{\frac{\alpha}{1-\alpha}}-
d_\alpha  (A(s,y))^{\frac{\alpha}{2-\alpha}}\sigma_\eta(s,y)
\right)dxdt }\\
\qquad \qquad \ds{ \geq  -  \int_0^1\int_{Q} \left(\sigma_\eta(s,y)\right)^{2-\alpha}
 dx dt\;. }\\
\end{array}
$$
So we have proved that 
$$
-W_\alpha(\rho_0, \rho_1)^2= \inf_{\phi\in C^1([0,1]\times \T^d)} J(\phi)  \geq - \inf_{\eta\in \Pi_{2,ac}(\rho_0,\rho_1)} K(\eta)\;.
$$

We now show the opposite inequality. For this, let $(\rho,w)$ be optimal in the problem
$$
W_\alpha(\rho_0, \rho_1)^2=\inf_{(\rho, w)} \int_0^1 \int_Q H(\rho^{\ac}, w) dx dt
$$
Let  $(\xi_\eps)_{\eps>0}$ be the heat kernel on $\R^d$. Then following \cite{dolnazsav} the pair
$(\rho_\eps, w_\eps)= (\rho, w)\star \xi_\eps$ satisfies the continuity equation
$$
\partial_t \rho_\eps +\dive(w_\eps)=0
$$
and is such that $\rho^{\ac}_\eps=\rho_\eps$ and 
$$
\lim_{\eps\to 0} \int_0^1 \int_Q H(\rho^{\ac}_\eps, w_\eps) dx dt
=
\int_0^1 \int_Q H(\rho^{\ac}, w) dx dt= W_\alpha(\rho_0, \rho_1)^2\;.
$$
Note also that $\rho_\eps$ is periodic, smooth in space and bounded below by a positive constant, while $w_\eps$ is also periodic and smooth. Therefore the flow
\begin{equation}\label{eq:eqdiff}
\left\{\begin{array}{l}
\ds \frac{d}{dt} X^x_t= \frac{w_\eps(t, X^x_t)}{\rho_\eps(t, X^x_t)}\qquad t\in [0,1]\\
X^x_0=x
\end{array}\right.
\end{equation}
is well defined and satisfies $X^{x+k}_t=X^x_t$ for any $k\in \Z^d$. From standard properties related to the continuity equation (see \cite{dm} for instance), we have 
$$
\rho_\eps(t,x)dx = X^\cdot_t \sharp \rho_{\eps,0}\qquad {\rm where }\; \rho_{\eps,0}= \rho_0\star \xi_\eps (x)dx
$$
Let us defined $\eta_\eps\in \Pi$ by 
$$
\int_{\Gamma} f(\gamma)d\eta_\eps(\gamma)= \int_{Q} f(X^x_\cdot) d\rho_{\eps,0}(x)
$$
for any continuous periodic and bounded map $f$ on $\Gamma$. Let us compute $\sigma_{\eta_\eps}$. We have 
$$
\begin{array}{rl}
\ds \int_0^1\int_Q f(t,x) d\sigma_{\eta_\eps}(t,x)\; = & \ds \int_Q\int_0^1 f(t,X^x_t) \left| \frac{w_\eps(t, X^x_t)}{\rho_\eps(t, X^x_t)}\right|^{\frac{2}{2-\alpha}}
dt d\rho_{\eps,0}(x)\\
= & \ds \ds \int_Q\int_0^1 f(t,y) \left| \frac{w_\eps(t, y)}{\rho_\eps(t, y)}\right|^{\frac{2}{2-\alpha}}
\rho_{\eps}(t,y) dtdy
\end{array}
$$
for any continuous, periodic map $f$ on $[0,1]\times \R^d$. 
Therefore 
$$
\sigma_{\eta_\eps}(t,x)= \frac{|w_\eps(t, x)|^{\frac{2}{2-\alpha}}}{ (\rho_\eps(t,x))^{\frac{\alpha}{2-\alpha}}}\;,
$$
which shows that $\eta_\eps\in \Pi_{2,ac}$. Moreover
$$
K(\eta_\eps)
=
\int_0^1\int_{Q}  \frac{|w_\eps(t, x)|^{2}}{ (\rho_\eps(t,x))^{\alpha}}
dxdt
= \int_0^1 \int_Q H(\rho_\eps, w_\eps) dx dt
$$
In particular, as $\eps\to 0$, $K(\eta_\eps)\to  W_\alpha(\rho_0, \rho_1)^2$, which conclude the first part of the theorem. The existence of a minimizer is a straightforward consequence of the next Lemma---the proof of which is postponed---stating that there is a subsequence $\eta_{\eps_n}$ which  converges weakly to some $\eta\in \Pi_{2,ac}$, together with Lemma~\ref{lem:Ksci}.
\end{proof}

\begin{lem}\label{lem:etan} Let $(\eta_n)$ be a sequence in $\Pi_{2,ac}$ such that $K(\eta_n)\leq C$ for some constant $C$. Then, up to a subsequence, $(\eta_n)$ weakly converges to some $\eta\in \Pi_{2,ac}$. 
\end{lem}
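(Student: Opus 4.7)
The plan is to obtain compactness of $(\eta_n)$ from the finite energy bound $K(\eta_n)\le C$, extract a weakly convergent subsequence by Prokhorov's theorem, and then appeal to the lower semicontinuity of $K$ established in Lemma~\ref{lem:Ksci} to verify that the limit belongs to $\Pi_{2,ac}$.

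First I would derive a uniform kinetic bound on the curves. Taking $f\equiv 1$ in the definition of $\sigma_\eta$ and applying Hölder's inequality on the unit-volume set $[0,1]\times Q$ to $\sigma_{\eta_n}\in L^{2-\alpha}$ gives
\[\int_\Gamma\int_0^1 |\dot\gamma(s)|^{\frac{2}{2-\alpha}}\,ds\,d\eta_n(\gamma)\;\le\;\Big(\int_0^1\!\!\int_Q \sigma_{\eta_n}^{2-\alpha}\Big)^{\!1/(2-\alpha)}\;\le\;C'.\]
This is the key ingredient for tightness.

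Next I would establish tightness of $(\eta_n)$. Using the $\Z^d$-periodicity of each $\eta_n$, I would identify it with a genuine probability measure on $C([0,1];\T^d)$ via the quotient map; the integrand $|\dot\gamma|^{2/(2-\alpha)}$ is well defined on $\T^d$-valued curves. Setting $p:=2/(2-\alpha)>1$ (which uses $\alpha>0$), the Sobolev embedding $W^{1,p}([0,1])\hookrightarrow C^{0,\,\alpha/2}([0,1])$ together with the compactness of $\T^d$ and Ascoli-Arzelà ensures that the sublevel sets
\[B_R=\Big\{\gamma\in C([0,1];\T^d)\;:\;\int_0^1|\dot\gamma|^p\,ds\le R\Big\}\]
are relatively compact. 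By Markov's inequality and the kinetic bound above, $\eta_n(B_R^c)\le C'/R$ uniformly in $n$, which is exactly tightness. Prokhorov then yields a subsequence $\eta_{n_k}$ converging weakly to some probability measure $\eta$ on $C([0,1];\T^d)$, whose $\Z^d$-periodic lift provides the candidate element of $\Pi$.

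Finally I would check $\eta\in\Pi_{2,ac}$. The lower semicontinuity of $K$ given by Lemma~\ref{lem:Ksci} yields $K(\eta)\le\liminf_k K(\eta_{n_k})\le C<\infty$, which by definition of $K$ forces $\sigma_\eta$ to be absolutely continuous. That $\eta\in\Pi_2$ follows from Fatou's lemma applied to the lower semicontinuous functional $\gamma\mapsto\int_0^1|\dot\gamma|^p\,ds$ on $\Gamma$. The main technical care, rather than a deep obstacle, lies in the correct handling of the periodic/equivariant structure: one must check that the $\Z^d$-invariant measures $\eta_n$ descend unambiguously to honest probability measures on $C([0,1];\T^d)$ so that the classical Prokhorov theorem applies, and that the resulting weak limit lifts back to a $\Z^d$-invariant measure in $\Pi$ on which Lemma~\ref{lem:Ksci} can be legitimately invoked.
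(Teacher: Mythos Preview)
Your proposal is correct and follows essentially the same route as the paper: derive a uniform bound on $\int_\Gamma\|\dot\gamma\|_{2/(2-\alpha)}^{2/(2-\alpha)}\,d\eta_n$ from $K(\eta_n)\le C$ via H\"older, use Markov's inequality plus compactness of $W^{1,2/(2-\alpha)}$-balls (modulo $\Z^d$) to get tightness, apply Prokhorov, and conclude $\eta\in\Pi_{2,ac}$ from the lower semicontinuity of $K$ in Lemma~\ref{lem:Ksci}. You are in fact slightly more explicit than the paper on two points: the paper's proof writes only the tightness step and leaves the verification that the limit lies in $\Pi_{2,ac}$ implicit (it is indeed Lemma~\ref{lem:Ksci} that does this), and the paper does not spell out the passage to $C([0,1];\T^d)$, simply asserting that $W\setminus E_R$ is relatively compact in $\Gamma$, which tacitly uses the periodic identification you describe.
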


\begin{proof}[Proof of Lemma \ref{lem:etan}] For $R>0$, let $E_R=\{\gamma \in  W\;, \; \|\dot \gamma\|_{\frac{2}{2-\alpha}}> R\}$, then 
$$
\begin{array}{rl}
\ds R^{\frac{2-\alpha}{2}} \eta_n (E_R) \; \leq  & \ds  \int_{E_R}\int_0^1 \left|\dot \gamma(t)\right|^{\frac{2}{2-\alpha}} dt  d\eta_n(\gamma)\\
\leq &  \ds \int_{\Gamma }\int_0^1 \left|\dot \gamma(t)\right|^{\frac{2}{2-\alpha}} dt  d\eta_n(\gamma)\\
\leq & \ds  \int_Q \int_0^1 \sigma_{\eta_n}(t,x)  dt dx\\
\leq & \ds \left[ \int_Q \int_0^1 (\sigma_{\eta_n}(t,x))^{2-\alpha}  dt dx\right]^{\frac{1}{2-\alpha}}\; \leq \; C^{\frac{1}{2-\alpha}}
\end{array}
$$
Since $W\backslash E_R$ is relatively compact in $\Gamma$,  the sequence $(\eta_n)$ is tight, which implies that it has a converging  subsequence. 
\end{proof}

\begin{rem} Assume $\eta\in \Pi_{2,ac}(\rho_0,\rho_1)$ is optimal for the problem (\ref{pb:gamma}). Then, for any $\eta'\in \in \Pi_{2,ac}(\rho_0,\rho_1)$, one has $\eta_\lambda :=(1-\lambda) \eta+\lambda \eta'\in  \Pi_{2,ac}(\rho_0,\rho_1)$ and $\sigma_{\eta_\lambda}= 
(1-\lambda) \sigma_\eta+\lambda \sigma_{\eta'}$. Therefore one gets as optimality condition for $\eta$ the inequality:
$$
\int_0^1\int_Q \sigma_\eta^{1-\alpha}(\sigma_{\eta'}-\sigma_\eta)  dx dt \; \geq \; 0\qquad \forall \eta'\in \in \Pi_{2,ac}(\rho_0,\rho_1)\;.
$$
Since the problem is convex, this necessary condition is also sufficient. 

Recalling the definition of $\sigma_\eta$, the above inequality heuristically means that any $\bar \gamma$ in the support of $\eta$ is optimal for the problem
$$
\inf_\gamma  \int_0^1  \sigma_\eta^{1-\alpha}(t, \gamma(t)) \left|\dot \gamma(t)\right|^{\frac{2}{2-\alpha}}  dt
$$
where the infimum is taken over the curves $\gamma$ such that $\gamma(0)=\bar \gamma(0)$ and $\gamma(1)=\bar \gamma(1)$.
\end{rem}

Next we explain some relations between the minimizer $(\rho, w)$ of \pref{dns}, the minimizer $\phi$ of \pref{primalr} and a minimizer $\eta$ of (\ref{pb:gamma}). 

\begin{prop}\label{prop:Liens} Assume that $\rho_0$ and $\rho_1$ belong to $L^\infty(\T^d)$. Let $(\rho,w)$, $\phi$ and $\eta$ be optimal for \pref{dns}, \pref{primalr} and \pref{pb:gamma} respectively. Then
$$
\sigma_\eta^{2-\alpha}= \frac{ |w|^2}{\rho^\alpha}= \frac{\kappa_\alpha}{1-\alpha} \frac{|\nabla \phi|^{\frac{2}{1-\alpha}}}{(-\partial_t\phi^{\ac})^{\frac{\alpha}{1-\alpha}}}
\qquad {\rm a.e.}
$$
with the convention that $0/0=0$ and $a/0=+\infty$ if $a>0$. Moreover, there is a minimizer $\eta$ of (\ref{pb:gamma}) such that $\rho_t= e_t\sharp \eta$ and
\begin{equation}\label{LienEtaRhoW}
\int_0^1\int_Q \langle F(t,x), w(t,x)\rangle  dx  dt =  \int_{\Gamma}\int_0^1 \langle F(t,\gamma(t)), \dot \gamma(t)\rangle   dt  d\eta(\gamma)
\end{equation}
for any continuous, periodic map $F:[0,1]\times \R^d\to \R^d$. 
\end{prop}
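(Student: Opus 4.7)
The first equality $|w|^2/\rho^\alpha=\tfrac{\kappa_\alpha}{1-\alpha}|\nabla\phi|^{2/(1-\alpha)}/(-\partial_t\phi^{\ac})^{\alpha/(1-\alpha)}$ is a direct consequence of Theorem~\ref{theo:opticond}. On $\{\rho>0\}$ I would substitute $w=\tfrac12\rho^\alpha\nabla\phi$ and $-\partial_t\phi^{\ac}=\tfrac\alpha4\rho^{\alpha-1}|\nabla\phi|^2$ to write $|w|^2/\rho^\alpha=\tfrac14\rho^\alpha|\nabla\phi|^2$, then eliminate $\rho^\alpha$ using the second relation and simplify with $\kappa_\alpha=(1-\alpha)\alpha^{\alpha/(1-\alpha)}/4^{1/(1-\alpha)}$. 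On $\{\rho=0\}$ Theorem~\ref{theo:opticond} forces $w=\nabla\phi=0$ and both expressions vanish by the stated conventions.

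For the remaining assertions, the plan is to start from \emph{any} minimizer $\eta$ of \pref{pb:gamma} (provided by Theorem~\ref{prop:equicurve}) and to deduce, via a disintegration--Jensen argument, that $(e_t\sharp\eta,w_\eta)=(\rho,w)$, where $w_\eta$ is the vector measure on $[0,1]\times Q$ defined by
\begin{equation*}
\int_0^1\!\!\int_Q F\cdot dw_\eta\ddd\int_\Gamma\!\!\int_0^1 F(t,\gamma(t))\cdot\dot\gamma(t)\,dt\,d\eta(\gamma)\qquad\forall F\in C([0,1]\times\T^d;\R^d).
\end{equation*}
Applying the chain rule to $\phi(t,\gamma(t))$ for $\phi\in C^1$ and integrating against $\eta$ shows, thanks to the marginal constraints $e_0\sharp\eta=\rho_0$, $e_1\sharp\eta=\rho_1$, that the pair $(\mu_\eta,w_\eta)$ with $d\mu_\eta=d(e_t\sharp\eta)\,dt$ weakly solves \pref{cont}, so it is admissible in \pref{dns}. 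Disintegrating $\eta=\int_Q\nu_{t,x}\,d(e_t\sharp\eta)(x)$ along $e_t$ for each $t$, the very definitions of $\sigma_\eta$ and $w_\eta$ unfold as identities of densities
\begin{equation*}
\sigma_\eta(t,x)=(e_t\sharp\eta)(x)\!\int_\Gamma |\dot\gamma(t)|^{2/(2-\alpha)}d\nu_{t,x},\qquad w_\eta(t,x)=(e_t\sharp\eta)(x)\!\int_\Gamma\dot\gamma(t)\,d\nu_{t,x},
\end{equation*}
and Jensen's inequality for the convex map $z\mapsto|z|^{2/(2-\alpha)}$ then yields the pointwise bound $\sigma_\eta(t,x)^{2-\alpha}\geq H(e_t\sharp\eta,w_\eta)(t,x)$ a.e.

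Combining the Jensen bound with $K(\eta)=W_\alpha(\rho_0,\rho_1)^2$ and the admissibility of $(\mu_\eta,w_\eta)$ in \pref{dns} gives the chain
\begin{equation*}
W_\alpha(\rho_0,\rho_1)^2=K(\eta)=\int_0^1\!\!\int_Q\sigma_\eta^{2-\alpha}\,dxdt\geq\int_0^1\!\!\int_Q H(e_t\sharp\eta,w_\eta)\,dxdt\geq W_\alpha(\rho_0,\rho_1)^2,
\end{equation*}
so both inequalities are equalities. The second makes $(\mu_\eta,w_\eta)$ optimal for \pref{dns}, and the uniqueness of the minimizer recorded right after Theorem~\ref{theo:opticond} identifies $(\mu_\eta,w_\eta)$ with $(\rho,w)$; this yields $e_t\sharp\eta=\rho_t$ and (\ref{LienEtaRhoW}) by the very definition of $w_\eta$. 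The first equality in the chain forces pointwise equality a.e.\ in Jensen, hence $\sigma_\eta^{2-\alpha}=|w_\eta|^2/(e_t\sharp\eta)^\alpha=|w|^2/\rho^\alpha$ a.e. The main obstacle I anticipate is the rigorous justification of the disintegration and the two density formulas above: one must check joint $(t,x)$-measurability of the family $(\nu_{t,x})$, $\nu_{t,x}$-integrability of $\dot\gamma(t)$ for a.e.\ $(t,x)$, and absolute continuity of $e_t\sharp\eta$ (which follows from $\sigma_\eta\in L^{2-\alpha}$ combined with the fact that singular parts of $e_t\sharp\eta$ would contribute singular parts to $\sigma_\eta$). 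All of this follows from standard disintegration theorems together with the bounds $K(\eta)<+\infty$ and $\int_\Gamma\int_0^1|\dot\gamma|^{2/(2-\alpha)}dt\,d\eta<+\infty$.
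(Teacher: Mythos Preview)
Your approach is correct and takes a genuinely different route from the paper's. The paper proceeds \emph{constructively}: it returns to the regularized measures $\eta_\eps$ built in the proof of Theorem~\ref{prop:equicurve} (via the flow of $w_\eps/\rho_\eps$), for which one has the exact identity $\sigma_{\eta_\eps}=|w_\eps|^{2/(2-\alpha)}/\rho_\eps^{\alpha/(2-\alpha)}$, and passes to the limit. Strong $L^{2-\alpha}$ convergence of $\sigma_{\eta_\eps}$ is obtained by squeezing, and since $\eta\mapsto\sigma_\eta$ is affine while $t\mapsto t^{2-\alpha}$ is strictly convex, this forces $\sigma_\eta=\sigma_{\bar\eta}$ for \emph{every} minimizer; the a.e.\ limit of $\sigma_{\eta_\eps}^{2-\alpha}$ then yields $|w|^2/\rho^\alpha$. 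The relation $e_t\sharp\eta=\rho_t$ and identity \pref{LienEtaRhoW} are established only for the particular limit $\bar\eta$, by passing to the limit in the corresponding exact identities for $\eta_\eps$ (with a truncation argument to handle the unbounded integrand $\langle F,\dot\gamma\rangle$).

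Your argument bypasses the approximation entirely: from an arbitrary minimizer $\eta$ you build $(\mu_\eta,w_\eta)$, bound its energy by $K(\eta)$ via disintegration and Jensen, and invoke the uniqueness of the minimizer of \pref{dns} stated in the remark after Theorem~\ref{theo:opticond}. This is more direct and in fact proves more---every minimizer $\eta$, not just a distinguished one, satisfies $e_t\sharp\eta=\rho_t$ and \pref{LienEtaRhoW}---at the price of relying on that uniqueness and on a clean disintegration (best done by disintegrating $dt\otimes\eta$ along $(t,\gamma)\mapsto(t,\gamma(t))$ to get joint measurability for free). One correction: your stated reason for the absolute continuity of $e_t\sharp\eta$ is wrong, since curves that are momentarily stationary contribute nothing to $\sigma_\eta$ yet can still concentrate $e_t\sharp\eta$. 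But you do not need this a priori: run the Jensen bound with $(\mu_\eta)^{\ac}$ to get $\int\sigma_\eta^{2-\alpha}\geq\int H((\mu_\eta)^{\ac},w_\eta)$, deduce optimality of $(\mu_\eta,w_\eta)$ in \pref{dns}, and then uniqueness identifies $\mu_\eta$ with the $L^\infty$ density $\rho$, so absolute continuity follows a posteriori.
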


\begin{proof} The second equality is a straightforward consequence of Theorem~\ref{theo:opticond}. As for the first one, let us use the strategy of proof of Theorem~\ref{prop:equicurve}. Let  $(\xi_\eps)_{\eps>0}$ be the heat kernel on $\R^d$, set $(\rho_\eps,w_\eps)= (\rho,w)\star \xi_\eps$,  and consider $\eta_\eps\in \Pi$ defined by 
$$
\int_{\Gamma} f(\gamma)d\eta_\eps(\gamma)= \int_{Q} f(X^x_\cdot) d\rho_{\eps,0}(x)
$$
for any continuous periodic and bounded map $f$ on $\Gamma$, where $X^x_\cdot$ is the solution of the differential equation (\ref{eq:eqdiff}). 
We already know that
$$
\sigma_{\eta_\eps}= \frac{|w_\eps(t, x)|^{\frac{2}{2-\alpha}}}{\rho_\eps^{\frac{\alpha}{2-\alpha}}}
$$
and that, up to some sequence,  
$\eta_\eps$ converges to some $\bar \eta$ which is optimal for \pref{pb:gamma}.  Since $(\sigma_{\eta_\eps})$ is bounded in $L^{2-\alpha}$, we can consider a weak limit $\sigma$ of the $(\sigma_{\eta_\eps})$ in $L^{2-\alpha}$ and we have explained in the proof of Lemma \ref{lem:Ksci} that $\sigma\geq \sigma_{\bar \eta}$. Hence
$$
W_\alpha(\rho_0,\rho_1)^2=\lim_{\eps\to 0} K(\eta_\eps) \geq \int_0^1\int_Q \sigma^{2-\alpha}\geq \int_0^1\int_Q \sigma_{\bar \eta}^{2-\alpha}\geq 
W_\alpha(\rho_0,\rho_1)^2.
$$

This implies that $\sigma_{\eta_\eps}$ strongly converges in $L^{2-\alpha}$ to $\sigma_{\bar \eta}$. The map $\eta\to \sigma_{\eta}$ being affine and $t\to t^{2-\alpha}$ strictly convex, we must have $\sigma_\eta=\sigma_{\bar \eta}$. Furthermore, 
$$
\sigma_{\eta_\eps}^{2-\alpha}=\frac{ |w_\eps(t, x)|^{2}}{\rho_\eps^{\alpha}} \to \frac{|w|^2}{\rho^\alpha} \qquad \mbox{\rm  a.e. on } \{\rho>0\}\;{\rm as}
\; \eps\to 0\;,
$$
so that $\sigma_\eta^{2-\alpha}= \frac{|w|^2}{\rho^\alpha}$  a.e. on $\{\rho>0\}$. To complete the proof we note that 
$$
\begin{array}{rl}
W_\alpha(\rho_0,\rho_1)\; = & \ds  \int_0^1\int_Q \sigma_\eta^{2-\alpha}\; \geq \;  \int_0^1\int_Q \sigma_\eta^{2-\alpha}{\bf 1}_{\{\rho>0\}} \\
= &  \ds  \int_0^1\int_Q \frac{|w|^2}{\rho^\alpha}{\bf 1}_{\{\rho>0\}} \; 
= \;  \int_0^1\int_Q \frac{|w|^2}{\rho^\alpha} \; 
= \; W_\alpha(\rho_0,\rho_1)
\end{array}
$$
Therefore $\sigma_\eta=0=\frac{|w|^2}{\rho^\alpha}$  a.e.  on $\{\rho=0\}$. 

Let us finally check that $\eta$ satisfies \pref{LienEtaRhoW}. By definition of $\eta_\eps$, $e_t\sharp \eta_\epsilon= \rho_{\eps,t}$. Moreover, for any continuous, periodic map $F:[0,1]\times \R^d\to \R^d$, we have 
$$
 \int_0^1\int_Q \langle F(t,x), w_\eps(t,x)\rangle  dx  dt =  \int_{\Gamma}\int_0^1 \langle F(t,\gamma(t)), \dot \gamma(t)\rangle   dt  d\eta_\eps(\gamma).
 $$
 Defining $I_F(\gamma):=  \int_0^1 \langle F(t,\gamma(t)), \dot \gamma(t)\rangle dt$, it is therefore enough to prove that $\int_\Gamma I_F d\eta_\eps \to \int_\Gamma I_F d\eta$ 
 as $\eps\to 0$ to obtain  \pref{LienEtaRhoW}. Let $R>0$ and
$$
B_R:=\{\gamma\in W^{1,\frac{2}{2-\alpha}}, \; \gamma(0)\in Q, \;  \|\dot \gamma\|_{\frac{2}{2-\alpha}}\leq R\},
$$
for $\N\in \N^*$ and $\gamma\in W^{1, \frac{2}{2-\alpha}}$, define 
  \[I_F^N(\gamma):=\sum_{k=0}^{N-1} \<F(k/N, \gamma(k/N), \gamma((k+1)/N)-\gamma(k/N)>.\]
 By standard uniform continuity arguments, observe that for fixed $R>0$, $\delta_{R,N}:=\sup_{\gamma \in B_R} \vert I_F(\gamma)-I_F^N(\gamma)\vert$ tends to $0$ as $N\to \infty$. Now since $F$ is bounded and $R^{\frac{2-\alpha}{2}}(\eta_\eps+\eta)(\Gamma\setminus B_R)\leq C$ (see Lemma~\ref{lem:etan})  we have
\[\begin{split}
\Big \vert \int_\Gamma I_F d(\eta_\eps-\eta)\Big \vert \leq C \frac{\Vert F \Vert_{\infty}}{R^{\frac{2-\alpha}{2}}}+  \Big \vert \int_{\Gamma\cap B_R} I_F d(\eta_\eps-\eta)  \Big \vert\\
 \leq C \frac{\Vert F \Vert_{\infty}}{R^{\frac{2-\alpha}{2}}}+ \Big \vert \int_{\Gamma\cap B_R} I_F^N d(\eta_\eps-\eta) \Big \vert+2 \delta_{R,N}.\end{split}\]
Finally, since $I_F^N$ is continuous for the $C^0$ topology, $\int_\Gamma I_F^N d\eta_\eps \to \int_\Gamma I_F^N d\eta$  as $\eps\to 0$ which is enough to conclude. 
 
\end{proof}

\end{document}